\tikzstyle{vertex}=[circle, draw, inner sep=0pt, minimum size=6pt]
\newcommand{\vertex}{\node[vertex]}
\newtheorem{thm}{Theorem}[section]
\newtheorem{theorem}[thm]{Theorem}
\newtheorem{lemma}[thm]{Lemma}
\newtheorem{prop}[thm]{Proposition}
\newtheorem{cor}[thm]{Corollary}
\newtheorem*{thm*}{Theorem}
\newtheorem*{lemma*}{Lemma}
\newtheorem*{prop*}{Proposition}
\newtheorem*{cor*}{Corollary}
\newtheorem*{conj*}{Conjecture}
\theoremstyle{definition}
\newtheorem{defn}[thm]{Definition}
\newtheorem{definition}[thm]{Definition}
\newtheorem{ex}[thm]{Example}
\newtheorem{remark}[thm]{Remark}
\newtheorem*{ques*}{Question}
\DeclareMathOperator{\rank}{rank}
\DeclareMathOperator{\cl}{cl}
\title{\textbf{Representability of Flag Matroids}}
\author{
Daniel Irving Bernstein \\
\normalsize{Mathematics Department} \\
\normalsize{Tulane University}\\
\normalsize{New Orleans, U.S.A.}\\
\normalsize{\texttt{dbernstein1@tulane.edu}}
\and
Nathaniel Vaduthala \\
\normalsize{Mathematics Department} \\
\normalsize{Tulane University}\\
\normalsize{New Orleans, U.S.A.}\\
\normalsize{\texttt{nvaduthala@tulane.edu}}
}
\date{}
\begin{document}
\maketitle

\begin{abstract}
    We provide a new axiom system for flag matroids, characterize representability of uniform flag matroids, and give forbidden minor characterizations of full flag matroids that are representable over $\mathbb{F}_2$ and $\mathbb{F}_3$ along with regular full flag matroids. We also provide different equivalent characterizations for regular full flag matroids. 
\end{abstract}

\section{Introduction}
Matroids were introduced in the 1930s as a combinatorial abstraction of the notion of linear independence in vector spaces.
Their conception is often attributed to a 1935 paper of Whitney~\cite{whitney1935abstract}
but they were also contemporaneously developed by Nakasawa~\cite{nishimura2009lost}.
Since then, matroid theory has developed into a rich area of combinatorics
with deep relevance in many seemingly unrelated areas ranging from optimization to algebraic geometry.
Much research within matroid theory has been motivated by the search for elegant combinatorial descriptions of graphic and representable matroids.
In broad terms, the goal of this paper is to expand that line of research into a generalization called \emph{flag matroids}.

Just as a matroid is a combinatorial abstraction of a linear subspace of a vector space, a flag matroid is a combinatorial abstraction of a sequence of nested linear subspaces of a vector space.
Flag matroids are typically defined to be a sequence of matroids on the same ground set
satisfying a particular compatibility condition.
They also have other equivalent cryptomorphic definitions~\cite{borovik2003coxeter}.
Our first contribution is a new cryptomorphic axiom system for flag matroids in terms of what we call \emph{feasible sets}, borrowing terminology and ideas from the theory of greedoids.
We then characterize representability for uniform flag matroids,
and give forbidden-minor classifications for full flag matroids that are
$\mathbb{F}_2$-representable, $\mathbb{F}_3$-representable, and regular. We then provide equivalent characterizations for regular full flag matroids.

Flag matroids were first studied in the 1960s and 70s as sequences of strong maps in~\cite{higgs1968strong,kennedy1975majors,cheung1976combinatorial,kung1977core}.
They have connections to the K-theory of flag varieties~\cite{cameron2017flag,dinu2021k,jarra2024flag}
and have a rich interplay with other combinatorial structures~\cite{de2007natural,benedetti2024lattice,benedetti2019quotients,fujishige2022compression}.
\emph{Gaussian elimination greedoids}, also called \emph{Gauss greedoids},
are a class of flag matroids that have been studied in the greedoid theory literature~\cite{greedoids}
and were recently shown~\cite{grinberg2021greedoid,grinberga2020bhargava} to have relevance to Barghava's theory of P-orderings~\cite{bhargava1997p}.
A matroid lift is a particular kind of flag matroid that is of fundamental importance in matroid theory itself~(see e.g.~\cite[Chapter~7]{oxley}) and has recently been used to study matroid representability~\cite{bernstein2024matroid} and rigidity theory~\cite{bernstein2022generic,clinch2021abstract}.

The paper is organized as follows.
Section~\ref{section: matroid background} covers the necessary matroid theory background.
Section~\ref{section: flag matroids} begins with some background on flag matroids.
We then provide a new cryptomorphic axiom system for flag matroids in Definition~\ref{defn: flag matroid cryptomorphism} that can be seen as a simultaneous generalization of the independent set, basis, and spanning set axioms of a matroid~(see Remark~\ref{remark: flag matroid axioms genearlize matroid axioms}).
We characterize the fields that each uniform flag matroid is representable over in Theorem~\ref{thm: uniform flag matroids representability}.
We review minors and duality for flag matroids and describe how these concepts manifest in our new axiom system.
It was shown in~\cite{kung1986strong} that to every flag matroid, one can associate a certain matroid called a \emph{major} that encodes the flag matroid in certain minors.
We recall this theory of majors, and show in Theorems~\ref{thm: representable iff has representable major} and~\ref{thm: graphic iff major is} that a flag matroid is graphic/$\mathbb{K}$-representable if and only if it has a major which is as well.
The main results of Section~\ref{section: representability} are Theorem~\ref{thm: full binary forbidden minors}, which gives excluded minor characterizations of $\mathbb{F}_2$- and $\mathbb{F}_3$-representable full flag matroids, and Theorem~\ref{thm: regular flag matroids forbidden minors}, which gives excluded minor characterizations of regular full flag matroids.

\section{Matroid theory background}\label{section: matroid background}

We begin with the minimal necessary background on matroid theory;
for a more leisurely and comprehensive introduction, see~\cite{oxley}.

\begin{definition}\label{defn: matroid}
    A matroid is a pair $M = (E, \mathcal{I})$, consisting of a finite set $E$ and a collection $\mathcal{I}$ of subsets of $E$ satisfying
\begin{enumerate}
    \item $\emptyset \in \mathcal{I}$ 
    \item for each $I' \subseteq I$, if $I \in \mathcal{I}$, then $I' \in \mathcal{I}$.
    \item for all $I, J \in \mathcal{I}$ such that $|J| > |I|$, there exists some $j \in J \setminus I$ such that $I \cup \{j\} \in \mathcal{I}$.
\end{enumerate}
Here $E$ is called the \emph{ground set} and elements of $\mathcal{I}$ are called \emph{independent sets}.
\end{definition}

Definitions~\ref{defn: uniform matroids} and~\ref{defn: representable matroids} below each give a family  of matroids.

\begin{defn}\label{defn: uniform matroids}
    Let $0 \le r \le n$ be integers, let $E$ be a set of size $n$ and let $\mathcal{I}$ consist of all subsets of $E$ of cardinality $r$ or less.
    Then $(E,\mathcal{I})$ is a matroid, denoted $U_{r,n}$.
    Matroids of the form $U_{r,n}$ are called \emph{uniform}.
\end{defn}

\begin{defn}\label{defn: representable matroids}
    Let $\mathbb{K}$ be a field and let $A$ be a matrix with entries in $\mathbb{K}$.
    If $E$ is (a set in natural bijection with) the column set of $A$ and $\mathcal{I}$ denotes the subsets of $E$ that are linearly independent,
    then $(E,\mathcal{I})$ is a matroid which we denote $M(A)$.
    Matroids arising in this way are called \emph{$\mathbb{K}$-representable}
    and a matrix $A$ such that $M = M(A)$ is called a \emph{$\mathbb{K}$-representation} of $M$.
\end{defn}

We now give two examples illustrating Definitions~\ref{defn: uniform matroids} and~\ref{defn: representable matroids}.

\begin{ex}\label{ex: representability of U24} 
Let $\mathbb{K}$ be a field with at least three elements and let $x \in \mathbb{K}\setminus\{0,1\}$.
Then the following is a $\mathbb{K}$-representation of $U_{2,4}$
\[
    \begin{pmatrix}
        0 & 1 & 1 & 1 \\
        1 & 0 & 1 & x
    \end{pmatrix}.
\]
Thus $U_{2,4}$ is $\mathbb{K}$-representable whenever $\mathbb{K}$ has three or more elements.
For each prime power $q$, the field with $q$ elements will be denoted $\mathbb{F}_q$.
It is relatively straightforward to show that $U_{2,4}$ is \emph{not} representable over $\mathbb{F}_2$.
\end{ex}

\begin{ex}\label{ex: fano plane}
    Consider the following matrix with entries in $\mathbb{F}_2$
    \[
        A =     
        \begin{pmatrix}
            1 & 1 & 1 & 1 & 0 & 0 & 0 \\
            1 & 1 & 0 & 0 & 1 & 1 & 0 \\
            1 & 0 & 1 & 0 & 1 & 0 & 1
        \end{pmatrix} \in \mathbb{F}_2^{3\times 7}.
    \]
    The matroid $M(A)$, often called the \emph{Fano matroid} and denoted $F_7$,
    is representable over $\mathbb{K}$ if and only if the characteristic of $\mathbb{K}$ is two~\cite[Proposition 6.4.8]{oxley}.
\end{ex}

    We now describe an alternative view of representable matroids.
    Let $\mathbb{K}$ be a field and let $V$ be a finite dimensional $\mathbb{K}$-vector space with basis $E$.
    For each linear subspace $L \subseteq V$, let $\mathcal{I}$ denote the subsets $I \subseteq E$
    such that the orthogonal projection of $L$ onto the linear space spanned by $I$ has dimension $|I|$.
    Then $(E,\mathcal{I})$ is a matroid which we denote $M(L)$.
    Matroids arising in this way are exactly the $\mathbb{K}$-representable matroids -
    if $A$ is any matrix whose rows represent a spanning set of $L$ in the basis $E$,
    then $M(A) = M(L)$.

The last family of examples of matroids we need to introduce come from graphs.
The graph with vertex set $V$ and edge set $E$ will be denoted $(V,E)$.
\begin{defn}\label{defn: graphic matroid}
    Let $G = (V,E)$ be a graph with loops and multiple edges allowed, and consider the family $\mathcal{I}$ of subsets of $E$ defined as follows
    \[
        \mathcal{I} := \{I \subseteq E : (V,I) {\rm \ has \ no \ cycles} \}.
    \]
    Then $(E,\mathcal{I})$ is a matroid, denoted $M(G)$.
    If $M$ is a matroid such that $M = M(G)$ for a graph $G$,
    then $M$ is said to be \emph{graphic}.
\end{defn}

We end this section by quickly defining a few more matroid-theoretic terms.
Let $M = (E,\mathcal{I})$ be a matroid.
Subsets of $E$ not in $\mathcal{I}$ are called \emph{dependent sets}.
Maximal elements of $\mathcal{I}$ are called \emph{bases}.
A \emph{circuit} of $M$ is a dependent set whose proper subsets are all independent.
The \emph{rank} of a subset $S \subseteq E$ is the maximum cardinality of an independent subset of $S$.
One denotes this as a function by $r_M : 2^E \rightarrow \mathbb{Z}$.
The \emph{rank of $M$} is $r_M(E)$ and is denoted $\rank(M)$.
Given $S \subseteq E$, the \emph{closure} of $S$, denoted $\cl_M(S)$, is the maximal superset of $S$ with the same rank,
and a \emph{flat} of $M$ is a subset of $E$ that is equal to its own closure. A \emph{spanning set} of $M$ is a subset $S\subseteq E$  that contains a basis of $M$.

\subsection{Forbidden minors and representability}

Given a matroid $M = (E,\mathcal{I})$ and $e \in E$,
one canonically defines two matroids $M \setminus e$ and $M / e$ on ground set $E \setminus \{e\}$
called the \emph{deletion} and \emph{contraction}.
If $\rank(\{e\}) \neq 0$, then independent sets are, respectively
\[
    \{I \in \mathcal{I} : e \notin I\} \qquad {\rm and} \qquad \{I \subseteq E: I \cup \{e\} \in \mathcal{I} \}.
\]
If $\rank(\{e\}) = 0$ then the independent sets of both the deletion and contraction are given by the first formula.
A \emph{minor} of $M$ is a matroid obtained from $M$ via a sequence of deletions and contractions.
The \emph{dual} $M^*$ of $M$ is the matroid on ground set $E$ whose independent sets are
\[
    \{I \subseteq E\setminus B : B {\rm \ is \ a \ basis \ of \ } M\}.
\]
Note that $M / e = (M^* \setminus e)^*$ for each $e \in E$.

All minors of a $\mathbb{K}$-representable matroid are $\mathbb{K}$-representable~\cite[Proposition 3.2.4]{oxley}.
Therefore, the class of $\mathbb{K}$-representable matroids can be classified via a list of minimal forbidden minors.
In other words, for each field $\mathbb{K}$, there exists a set $\mathcal{M}_\mathbb{K}$ of non-$\mathbb{K}$-representable matroids,
all of whose minors are $\mathbb{K}$-representable, such that a matroid $M$ is $\mathbb{K}$-representable if and only if it has no minors in $\mathcal{M}_\mathbb{K}$.
A well-known conjecture, often called \emph{Rota's conjecture},
states that $\mathcal{M}_{\mathbb{K}}$
is finite whenever $\mathbb{K}$ is finite~\cite{geelen2014solving}.
Rota's conjecture is known to be true for $\mathbb{F}_q$ for $q = 2,3,4$~\cite[Chapter~6.5]{oxley}.
We will later use the explicit forbidden minors for $\mathbb{F}_2$ and $\mathbb{F}_3$ representability
which we now state.

\begin{theorem}[{\cite[Theorems 6.5.4 and 6.5.7]{oxley}}]\label{theorem: forbidden minors finite field representability}
    Let $M$ be a matroid.
    Then $M$ is $\mathbb{F}_2$-representable if and only if $M$ has no minor isomorphic to $U_{2,4}$,
    and $M$ is $\mathbb{F}_3$-representable if and only if $M$ has no minor isomorphic to $U_{2,5},U_{3,5},F_7$ or~$F_7^*$.
\end{theorem}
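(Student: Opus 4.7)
The plan splits along the two biconditionals, with each ``only if'' direction quick and each ``if'' direction carrying the real content. For the ``only if'' directions, I would combine the minor-closure of $\mathbb{K}$-representability (already noted in the excerpt) with four specific non-representability checks. Example~\ref{ex: representability of U24} rules out $U_{2,4}$ over $\mathbb{F}_2$. For $\mathbb{F}_3$, counting one-dimensional subspaces of $\mathbb{F}_3^2$ gives only $(3^2-1)/(3-1) = 4 < 5$ lines through the origin, so $U_{2,5}$ is not $\mathbb{F}_3$-representable; the case $U_{3,5} = U_{2,5}^*$ then follows from the closure of $\mathbb{K}$-representability under duality (orthogonal complements). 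Finally, Example~\ref{ex: fano plane} gives that $F_7$ is not $\mathbb{F}_3$-representable, and $F_7^*$ follows by the same duality remark.

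For the hard direction in the binary case I would follow Tutte's route via the circuit-cocircuit parity characterization: $M$ is $\mathbb{F}_2$-representable if and only if $|C \cap C^*|$ is even for every circuit $C$ and cocircuit $C^*$. Representability implies parity by a short linear-algebra calculation with incidence vectors living in the row space and kernel of a binary matrix. For the converse, fix a basis $B$ of $M$, build the fundamental-circuit incidence matrix whose row for each $e \notin B$ records the fundamental $B$-circuit of $e$, and check using parity plus circuit elimination that the binary matroid of this $0/1$-matrix coincides with $M$. It then remains to show that the absence of a $U_{2,4}$ minor forces the parity condition. I would argue by contradiction: given a circuit $C$ and cocircuit $C^*$ with $|C \cap C^*|$ odd and at least $3$, iterated strong circuit and cocircuit elimination, combined with deletions and contractions on the complements, produces a minor in which $C$ and $C^*$ are both of size $3$ and share all three elements, from which $U_{2,4}$ is obtained after one final reduction.

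For the hard direction in the ternary case I would follow the Bixby-Seymour-Reid-Tutte strategy and induct on $|E(M)|$ via connectivity reductions. If $M$ has a $1$- or $2$-separation, decompose $M$ into smaller matroids, apply induction to each piece, and reassemble; a $2$-sum of $\mathbb{F}_3$-representable matroids is $\mathbb{F}_3$-representable, so the gluing step succeeds. This reduces to the $3$-connected case, in which one chooses a sufficiently rich $3$-connected $\mathbb{F}_3$-representable minor $N$ of $M$ and invokes Seymour's Splitter Theorem to build $M$ from $N$ by single-element extensions, using the fact that $N$ can be taken to be an $\mathbb{F}_3$-\emph{stabilizer} so that the $\mathbb{F}_3$-representation extends uniquely at every step unless one of the four forbidden minors appears along the way.

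The main obstacle is the hard direction, and it is of a completely different character in the two cases. In the $\mathbb{F}_2$ case the delicate step is the inductive circuit/cocircuit elimination needed to extract a $U_{2,4}$ minor from an odd intersection; the rest of the argument is linear algebra. In the $\mathbb{F}_3$ case the obstacle is infrastructural: the splitter theorem, stabilizer theory, and the detailed connectivity case analysis form a substantial body of material not developed in the excerpt and historically required several papers. For the purposes of the present paper both biconditionals are cited directly from Oxley, so in an in-paper proof I would only spell out the four non-representability checks in detail and treat the excluded-minor direction as a black box.
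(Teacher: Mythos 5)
This statement is quoted directly from Oxley (Theorems 6.5.4 and 6.5.7) and the paper supplies no proof of it, so your plan to treat the hard excluded-minor directions as a black-box citation is exactly what the paper does, and your verifications of the easy direction are correct: minor-closure of $\mathbb{K}$-representability, $U_{2,4}$ non-binary, the point count $(3^2-1)/(3-1)=4<5$ ruling out $U_{2,5}$ over $\mathbb{F}_3$, and duality handling $U_{3,5}=U_{2,5}^*$ and $F_7^*$. Your sketches of Tutte's circuit--cocircuit parity argument for $\mathbb{F}_2$ and of the connectivity/splitter-theorem route for $\mathbb{F}_3$ are recognizable outlines of the standard proofs, but they compress several substantial lemmas and are neither attempted nor needed in the paper.
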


Minors of graphic matroids are graphic~\cite[Corollary 3.2.2]{oxley},
so the class of graphic matroids can be defined via excluded minors as in Theorem~\ref{theorem: forbidden minors graphic} below.
Recall that $K_n$ denotes the complete graph on $n$ vertices and $K_{m,n}$ denotes the complete bipartite
graph on partite sets of size $m$ and $n$.

\begin{theorem}[{\cite[Theorem~10.3.1]{oxley}}]\label{theorem: forbidden minors graphic}
    A matroid is graphic if and only if it has no minor isomorphic to any of
    $U_{2,4},F_7,F_7^*,M(K_5)^*,$ or $M(K_{3,3})^*$.
\end{theorem}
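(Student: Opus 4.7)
The plan is to prove both directions, with the reverse direction being the substantive content (originally due to Tutte).

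For the forward (necessity) direction, since minors of graphic matroids are graphic, it suffices to verify that each of the five listed matroids is non-graphic. For $U_{2,4}$: a rank-$2$ graphic matroid on four elements must come from a graph with cycle rank $2$ and four edges, and any such graph has two parallel edges, producing a dependent $2$-subset, contradicting uniformity. For $F_7$ and $F_7^*$: graphic matroids are regular (the signed vertex-edge incidence matrix is totally unimodular), hence representable over every field, but by Example~\ref{ex: fano plane}, $F_7$ is representable only in characteristic $2$; self-duality of the class of regular matroids then rules out $F_7^*$ as well. For $M(K_5)^*$ and $M(K_{3,3})^*$: by Whitney's planarity theorem, $M(G)^*$ is graphic if and only if $G$ is planar, and $K_5$, $K_{3,3}$ are non-planar by Kuratowski.

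For the reverse direction, I would proceed by successive reduction to smaller classes. First, excluding $U_{2,4}$ yields, via Theorem~\ref{theorem: forbidden minors finite field representability}, that $M$ is binary. Second, Tutte's characterization of regular matroids as the binary matroids with no $F_7$ or $F_7^*$ minor upgrades this to: $M$ is regular. The remaining and genuinely hard task is then to show that a regular matroid with no $M(K_5)^*$ and no $M(K_{3,3})^*$ minor is graphic.

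For this final step, the cleanest modern route is via Seymour's decomposition theorem, which expresses every regular matroid as a $1$-, $2$-, or $3$-sum built from graphic matroids, cographic matroids, and copies of the sporadic matroid $R_{10}$. I would check that $R_{10}$ contains $M(K_{3,3})^*$ (or $M(K_5)^*$) as a minor, eliminating it as a constituent; that excluding $M(K_5)^*$ and $M(K_{3,3})^*$ forces every cographic summand $M(G)^*$ to be planar-graphic, hence graphic; and that $1$-, $2$-, and $3$-sums of graphic matroids, assembled along cocircuits of the appropriate sizes, remain graphic by reassembly of the underlying graphs along vertex identifications and small edge cuts. Tutte's original argument is more direct but far more intricate, extracting a minimal counterexample and using his theory of bridges and $3$-connected reductions to force one of the listed minors to appear. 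The main obstacle in either approach is this last sufficiency step: it requires substantial external machinery (Seymour decomposition or the bridge-and-chain theory), which is why the result is quoted here rather than proved from the axiom-level tools developed in this paper.
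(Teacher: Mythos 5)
The paper does not prove this statement at all: it is quoted verbatim as Tutte's excluded-minor theorem for graphic matroids, citing \cite[Theorem~10.3.1]{oxley}, and is used later as a black box. So there is no proof in the paper to compare against, and the honest assessment is of your sketch on its own terms. Your necessity direction is essentially complete modulo standard facts: the counting argument for $U_{2,4}$, the regularity/representability argument for $F_7$ and $F_7^*$, and Whitney's planarity criterion for $M(K_5)^*$ and $M(K_{3,3})^*$ are all correct and are exactly how this half is usually done. Your sufficiency direction, however, is a roadmap rather than a proof: the entire content is delegated to Seymour's decomposition theorem (or to Tutte's bridge theory), neither of which you establish, and you yourself acknowledge this. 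That is an acceptable stance given that the paper also treats the theorem as external, but it should be understood that the proposal does not constitute a proof of the hard direction.

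Two specific points in the sufficiency sketch deserve correction or emphasis. First, $R_{10}$ has rank $5$ on $10$ elements while $M(K_5)^*$ has rank $10-4=6$, so $R_{10}$ cannot contain $M(K_5)^*$ as a minor; the parenthetical ``(or $M(K_5)^*$)'' is impossible. The correct fact is that every single-element contraction of $R_{10}$ is isomorphic to $M(K_{3,3})^*$ (and every single-element deletion to $M(K_{3,3})$), which does eliminate $R_{10}$ as a summand. Second, for the exclusion of forbidden minors from the summands to be legitimate, you need the version of Seymour's theorem in which the parts of the $1$-, $2$-, and $3$-sum decomposition are themselves minors of $M$; this is part of the theorem but must be invoked explicitly, as otherwise excluding $M(K_5)^*$ and $M(K_{3,3})^*$ from $M$ says nothing about the cographic summands.
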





\section{Flag Matroids}\label{section: flag matroids}
A \emph{flag} is a nested sequence of linear subspaces.
More formally, given a vector space $V$, a flag is a sequence $(L_1,\dots,L_k)$
of linear subspaces of $V$ so that $L_i \subseteq L_{i+1}$ for each $i$.
In this section, we formally define \emph{flag matroids}
which combinatorially abstract flags in the same way that matroids combinatorially abstract linear subspaces.
Our first order of business is to define flag matroids in their usual axioms,
then provide an alternative, but equivalent, set of axioms.

\begin{definition}
    Let $M$ and $N$ be matroids on the same ground set $E$. $N$ is said to be a \emph{lift} of $M$, or equivalently $M$ is a \emph{quotient} of $N$, if every flat of $M$ is a flat of $N$.
\end{definition}

\begin{prop}[{\cite[Proposition 7.4.7]{white1986theory}}, {\cite[Proposition 7.3.6]{oxley}}]\label{prop: equivalent characterizations of lifts}
    The following are equivalent for matroids $M$ and $N$ on a common ground set $E$:
    \begin{enumerate}
        \item $N$ is a lift of $M$
        \item $M^*$ is a lift of $N^*$
        \item there exists a matroid $Q$ on ground set $E(Q)$ and some $X \subseteq E(Q)$ such that $M = Q / X$ and $N = Q \setminus X$
        \item if $X \subseteq E$, then $\cl_N(X) \subseteq \cl_M(X)$
        \item for each basis $B$ of $N$ and $e \in E\setminus B$, there exists some basis $B'$ of $M$ such that $B' \subseteq B$ and
        \begin{equation*}
            \{f : (B'\cup e)\setminus f \text{ is a basis of $M$ }\} \subseteq \{f : (B\cup e)\setminus f \text{ is a basis of $N$ }\}
        \end{equation*}
    \end{enumerate}
\end{prop}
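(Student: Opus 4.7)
The plan is to organize the five conditions around the tautological core equivalence $(1) \Leftrightarrow (4)$, and then deduce each of $(2), (3), (5)$ from it separately. The direction $(1) \Rightarrow (4)$ is immediate: $\cl_M(X)$ is a flat of $M$ and hence, by (1), a flat of $N$; since it contains $X$, the minimality of $\cl_N(X)$ forces $\cl_N(X) \subseteq \cl_M(X)$. Conversely, if (4) holds and $F$ is a flat of $M$, then $F \subseteq \cl_N(F) \subseteq \cl_M(F) = F$, so $F = \cl_N(F)$ is a flat of $N$.

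For $(1) \Leftrightarrow (2)$, I would translate (4) into the equivalent rank-difference inequality
\[
    r_M(Y) - r_M(X) \le r_N(Y) - r_N(X) \qquad \text{for all } X \subseteq Y \subseteq E,
\]
that is, the gap function $r_N - r_M$ is monotone nondecreasing. The dual-rank formula $r_{M^*}(X) = |X| + r_M(E \setminus X) - r_M(E)$ yields $r_{M^*}(X) - r_{N^*}(X) = (r_N - r_M)(E) - (r_N - r_M)(E \setminus X)$, so the gap $r_{M^*} - r_{N^*}$ is monotone nondecreasing in $X$ precisely when $r_N - r_M$ is monotone nondecreasing, which is the symmetric statement $(2)$.

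For $(1) \Leftrightarrow (3)$, the direction $(3) \Rightarrow (4)$ is straightforward: for any $S \subseteq E$ disjoint from $X$, one has $\cl_{Q \setminus X}(S) = \cl_Q(S) \setminus X \subseteq \cl_Q(S \cup X) \setminus X = \cl_{Q/X}(S)$, by monotonicity of $\cl_Q$. The converse $(4) \Rightarrow (3)$ is the main obstacle. I would induct on $k := r_N(E) - r_M(E)$, reducing to the elementary case $k = 1$. In that case I would define $Q$ on $E \sqcup \{p\}$ by declaring its flats to be the flats of $N$ not containing $p$, together with $\{F \cup \{p\} : F \text{ a flat of } M\}$. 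Verifying that this family satisfies the flat axioms (closure under intersection, and the partition-by-covers condition), and checking that $Q \setminus p = N$ and $Q/p = M$, is the technical heart of the proof.

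Finally, for $(1) \Leftrightarrow (5)$, I would observe that the set $\{f : (B \cup e) \setminus f \text{ is a basis of } N\}$ is exactly the fundamental circuit $C_N(e, B)$ of $e$ with respect to $B$ in $N$, and similarly for $M$. Thus (5) asserts that for each $N$-basis $B$ and $e \notin B$, there is an $M$-basis $B' \subseteq B$ whose fundamental $M$-circuit of $e$ is contained in the fundamental $N$-circuit of $e$. Assuming (4), one produces such a $B'$ by taking any maximal $M$-independent subset of $B$ and verifying the circuit containment via the rank-difference inequality. Conversely, starting from (5), an arbitrary $N$-circuit $C$ can be realized as $C_N(e, B)$ by extending $C \setminus \{e\}$ to a basis $B$ of $N$; the $B'$ supplied by (5) then yields an $M$-circuit inside $C$, from which $\cl_N \subseteq \cl_M$ follows by a short argument on dependent extensions.
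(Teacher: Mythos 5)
The paper gives no proof of this proposition (it is quoted from Bonin--Chun and Oxley), so you are being measured against the standard arguments. Your skeleton is the standard one, and your treatments of $(1)\Leftrightarrow(4)$, $(1)\Leftrightarrow(2)$ via monotonicity of the gap $r_N-r_M$, $(3)\Rightarrow(4)$, and $(5)\Rightarrow(4)$ are all correct. But two steps fail as written. In $(4)\Rightarrow(3)$, since $p\notin E$ every flat of $N$ ``does not contain $p$,'' so your candidate flat family for $Q$ is \emph{all} flats of $N$ together with $\{F\cup p : F \text{ a flat of } M\}$, and this violates the cover/partition axiom. Take $N=U_{2,3}$ on $\{a,b,c\}$ and $M=U_{1,3}$: the unique $Q$ with $Q\setminus p=N$ and $Q/p=M$ is $U_{2,4}$, in which $\{a,b,c\}$ is \emph{not} a flat; your family contains both $\{a\}$ and $\{a,b,c\}$, so no flat covering $\{a\}$ contains $p$. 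The repair is to delete from the family exactly the flats of $N$ lying in the modular cut of the quotient, i.e., to keep only those flats $F$ of $N$ with $r_M(F)=r_N(F)$ alongside the sets $F\cup p$ for $F$ a flat of $M$ (equivalently, define $Q$ by the Higgs rank function $r_Q(A)=\min\{\,r_N(A\cap E)+|A\cap\{p\}|,\ r_M(A\cap E)+1\,\}$ and check its deletion and contraction at $p$).

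The second gap is in $(4)\Rightarrow(5)$: it is false that \emph{any} maximal $M$-independent subset $B'$ of $B$ has $C_M(e,B')\subseteq C_N(e,B)$. Let $N$ be the rank-$3$ matroid on $\{a,b,g,e\}$ whose only nontrivial line is $\{a,g,e\}$, and let $M=U_{2,4}$; then every flat of $M$ is a flat of $N$, $B=\{a,b,g\}$ is a basis of $N$, and $C_N(e,B)=\{a,g,e\}$, yet the basis $B'=\{a,b\}$ of $M$ gives $C_M(e,B')=\{a,b,e\}\not\subseteq\{a,g,e\}$ (whereas $B'=\{a,g\}$ works). Your sketched verification silently uses $\cl_M(B\setminus f)=\cl_M(B'\setminus f)$, which need not hold when $B\setminus B'$ is nonempty. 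The fix is to choose $B'$ adapted to the circuit: extend a maximal $M$-independent subset of $C_N(e,B)\setminus e$ to a maximal $M$-independent subset $B'$ of $B$. Then $e\in\cl_N(C_N(e,B)\setminus e)\subseteq\cl_M(C_N(e,B)\setminus e)=\cl_M\bigl(B'\cap(C_N(e,B)\setminus e)\bigr)$, so the unique $M$-circuit of $e$ in $B'\cup e$ lies inside $C_N(e,B)$, as required.
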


We remark that if $N$ is a lift of $M$, the rank of $N$ is bounded below by the rank of $M$. If the ranks are equal, then $N = M$ and the lift is said to be \emph{trivial}. When the rank of $N$ is one greater than that of $M$, then the lift is said to be \emph{elementary}.

\begin{defn}\label{defn: lifts and flags}
    A \emph{flag matroid} is a sequence of matroids $(M_1,\dots,M_k)$ such that for each $i$, $M_{i+1}$ is a nontrivial lift of $M_i$.
\end{defn}

Taking inspiration from the way matroids can be equivalently defined in many ways,
we now offer the following alternative axiom system for flag matroids.
Theorem~\ref{thm: flag matroid cryptomorphism} establishes that Definitions~\ref{defn: lifts and flags} and~\ref{defn: flag matroid cryptomorphism} indeed define the same object. 

\begin{definition}\label{defn: flag matroid cryptomorphism}
     Let $E$ be a finite set and let $\mathcal{F}$ be a collection of non-empty subsets of $E$.  A pair $(E, \mathcal{F})$ is called a \emph{set-system flag matroid} if
     \begin{enumerate}
         \item if $F,G \in \mathcal{F}$ satisfy $|F| = |G|$ and $x \in F \setminus G$,
        then there exists $y \in G \setminus F$ such that $G \cup \{x\} \setminus \{y\} \in \mathcal{F}$ 
        \item if there exist sets in $\mathcal{F}$ of different cardinality, then for any $F \in \mathcal{F}$ of non-minimal cardinality and $e \in E\setminus F$, there exists some $G \in \mathcal{F}$ such that $G \subsetneq F$, $|G| = \max\{|S| : S\in \mathcal{F} \text{ and } |S| < |F|\}$, and
        \begin{equation*}
            \{f : (G\cup e)\setminus f \in \mathcal{F}\} \subseteq \{f : (F\cup e)\setminus f \in \mathcal{F}\}
        \end{equation*}
     \end{enumerate}
     We refer to $E$ as the \emph{ground set} and $\mathcal{F}$ as the \emph{feasible sets}.
     The \emph{rank} of a flag matroid is the size of its largest feasible set.
\end{definition}

The terminology ``feasible sets'' comes from the theory of \emph{greedoids}, a generalization of matroids. We will later see that certain classes of greedoids are flag matroids. See~\cite{greedoids} for more about greedoids.
Theorem~\ref{thm: flag matroid cryptomorphism} below tells us how
Definitions~\ref{defn: lifts and flags} and~\ref{defn: flag matroid cryptomorphism} describe the same object.

\begin{theorem}\label{thm: flag matroid cryptomorphism}
    Let $E$ be a finite set, let $\mathcal{F}$ be a set of subsets of $E$, and let
    let $j_1 < \dots < j_k$ be the cardinalities of elements of $\mathcal{F}$.
    For $i = 1,\dots,k$ define
    \[
        \mathcal{B}_i := \{F \in \mathcal{F} : |F| = j_i\}.
    \]
    Then $(E,\mathcal{F})$ is a set-system flag matroid if and only if each $\mathcal{B}_i$ is the set of bases of a matroid $M_i$
    and $(M_1,\dots,M_k)$ is a flag matroid.
\end{theorem}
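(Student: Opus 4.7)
The plan is to decompose $\mathcal{F}$ into its cardinality layers $\mathcal{B}_1,\ldots,\mathcal{B}_k$ and translate between the two axiom systems via the basis-level characterization of lifts in Proposition~\ref{prop: equivalent characterizations of lifts}(5). The underlying observation that will make the translation essentially mechanical is this: if $f\in G\cup\{e\}$ then $(G\cup e)\setminus f$ has cardinality $|G|$, so this set lies in $\mathcal{F}$ if and only if it lies in $\mathcal{B}_{|G|}$, i.e., is a basis of the (to-be-constructed) matroid $M_{|G|}$; and analogously for $(F\cup e)\setminus f$ with $f\in F\cup\{e\}$.

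For the forward direction, I would assume $(E,\mathcal{F})$ satisfies Definition~\ref{defn: flag matroid cryptomorphism}. Restricting axiom~(1) to pairs $F,G$ with $|F|=|G|$ keeps everything inside the same $\mathcal{B}_i$, so each nonempty $\mathcal{B}_i$ satisfies matroid basis exchange and is the basis set of a matroid $M_i$ of rank~$j_i$. To show $M_{i+1}$ is a nontrivial lift of $M_i$, I would apply axiom~(2) to a basis $B\in\mathcal{B}_{i+1}$ and an element $e\in E\setminus B$: this supplies some $G\subsetneq B$ with $|G|=j_i$ (hence a basis of $M_i$) together with the feasible-set inclusion. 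By the cardinality observation above, that inclusion is precisely condition~(5) of Proposition~\ref{prop: equivalent characterizations of lifts} for the pair $(M_i,M_{i+1})$, and nontriviality follows from $j_i<j_{i+1}$.

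For the backward direction, I would assume the $\mathcal{B}_i$'s are basis sets of matroids $M_i$ forming a flag matroid in the sense of Definition~\ref{defn: lifts and flags}. Axiom~(1) of Definition~\ref{defn: flag matroid cryptomorphism} is immediate because two feasible sets of equal cardinality both lie in the same $\mathcal{B}_i$, so matroid basis exchange applied in $M_i$ produces the required swap. For axiom~(2), given $F\in\mathcal{B}_{i+1}$ and $e\in E\setminus F$, I would invoke Proposition~\ref{prop: equivalent characterizations of lifts}(5) for the lift $M_{i+1}\to M_i$ to obtain a basis $G$ of $M_i$ with $G\subseteq F$ satisfying the basis inclusion. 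Then $|G|=j_i<j_{i+1}=|F|$ forces $G\subsetneq F$ with the correct maximal cardinality, and the cardinality observation promotes the basis inclusion to the required feasible-set inclusion.

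The only delicate point, and the one I expect to need the most care in writing out, is the reading of the sets $\{f:(G\cup e)\setminus f\in\mathcal{F}\}$ in axiom~(2): the index $f$ is to be interpreted as ranging over $G\cup\{e\}$ on the left and over $F\cup\{e\}$ on the right, matching the implicit convention in Proposition~\ref{prop: equivalent characterizations of lifts}(5). With that convention the cardinalities of $(G\cup e)\setminus f$ and $(F\cup e)\setminus f$ are pinned at $j_i$ and $j_{i+1}$ respectively, the translation between ``feasible'' and ``basis of $M_j$'' is clean, and the theorem reduces to comparing two different bookkeeping schemes for the same compatibility between consecutive matroids in the flag.
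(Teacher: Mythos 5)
Your proposal is correct and follows essentially the same route as the paper's proof: layer $\mathcal{F}$ by cardinality, get each $M_i$ from the exchange axiom, and use Proposition~\ref{prop: equivalent characterizations of lifts}(5) to translate axiom~(2) into the lift condition and back. You simply spell out the cardinality bookkeeping (and the convention on the index $f$) more explicitly than the paper does.
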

\begin{proof}
    First assume $(E,\mathcal{F})$ is a flag matroid.
    The first condition in Definition~\ref{defn: flag matroid cryptomorphism} implies that each $\mathcal{B}_i$
    is the set of bases of a matroid $M_i$.
    The second condition, together with Proposition~\ref{prop: equivalent characterizations of lifts},
    implies that each $M_{i+1}$ is a lift of $M_i$.

    Now assume each $\mathcal{B}_i$ is the set of bases of a matroid $M_i$ such that $(M_1,\dots,M_k)$ is a flag matroid.
    If $F,G \in \mathcal{F}$ have the same cardinality then $F,G \in \mathcal{B}_i$ for some $i$.
    By the basis exchange axiom, $(E,\mathcal{F})$ satisfies the first condition of Definition~\ref{defn: flag matroid cryptomorphism}.
    Since each $M_{i+1}$ is a lift of $M_i$, Proposition~\ref{prop: equivalent characterizations of lifts} implies the second condition is satisfied as well.
\end{proof}

We call the sequence $(M_1, \dots, M_r)$ of matroid lifts associated to a set-system flag matroid $\mathfrak{F} = (E,\mathcal{F})$ by Theorem~\ref{thm: flag matroid cryptomorphism}
the \emph{sequential representation} of $(E,\mathcal{F})$.
The following definition gives a family of flag matroids associated to each matroid.

\begin{defn}\label{defn: flag matroids from matroid}
    Let $M$ be a matroid on ground set $E$ with independent sets $\mathcal{I}$ and spanning sets $\mathcal{S}$.
    For each pair of integers $0 \le s \le r \le |E|$, define
    \begin{align*}
        \mathfrak{F}_{s,r}(M) &:= (E,\{S \in \mathcal{I} \cup \mathcal{S}: s \le |S| \le r\}) \qquad
        \mathcal{I}(M):=\mathfrak{F}_{0,r_M(E)} \\
        &\mathcal{B}(M) := \mathfrak{F}_{r_M(E),r_M(E)} \qquad
        \mathcal{S}(M):=\mathfrak{F}_{r_M(E),|E|}.
    \end{align*}
\end{defn}
We claim that $\mathfrak{F}_{s,r}(M)$ is a set-system flag matroid.
The first axiom follows from the matroid basis exchange axiom.
Indeed, the independent sets of $M$ of size $k \le r$ are the bases
of the $(r-k)^{\rm th}$ truncation of $M$ and the spanning sets of size $k \ge r$ are the bases of the $(k-r)^{\rm th}$ elongation.
The second axiom follows the hereditary axiom for independent sets of a matroid if $S$ is independent, and it follows from the fact that supersets of spanning sets are spanning if $S$ is a spanning set.

\begin{remark}\label{remark: flag matroid axioms genearlize matroid axioms}
    Since $\mathcal{I}(M), \mathcal{B}(M)$ and $\mathcal{S}(M)$ are all set-system flag matroids,
    one can think of the set-system flag matroid axioms given in Definition~\ref{defn: flag matroid cryptomorphism} as a simultaneous generalization of the independent set, basis, and spanning set axioms of a matroid.    
\end{remark}

Set-system flag matroids that are not matroids that have a feasible set of every size between $0$ and some $r \ge 0$ have been studied
previously under the names \emph{Gaussian elimination greedoids}~\cite{grinberga2020bhargava,grinberg2021greedoid} and \emph{Gauss greedoids}~\cite{greedoids}.

One can make an analogy between feasible sets of a set-system flag matroid and independent sets of a matroid and define the \emph{rank} of a subset of the ground set of a flag matroid to be the maximum cardinality of a feasible subset.
Of course, the feasible sets of a set-system flag matroid can be recovered from its rank function and vice versa,
but it would be interesting to know if there were a characterization of set-system flag matroid rank functions similar to the cryptomorphic definition of matroids in terms of their rank functions.
Since subsets of feasible sets of set-system flag matroids need not be feasible,
a satisfactory notion of circuits for flag matroids is not so obvious
and an axiom system in terms of them, even less so.
Answering these questions would make for interesting future research.

For the rest of this paper, we will be primarily working with the language of set-system flag matroids but we will simply just refer to them as flag matroids.

\subsection{Representable Flag Matroids}

Given a matrix $A \in \mathbb{K}^{r\times n}$
and $d \le r$, we let $A_{\le d}$ denote the $d\times n$ submatrix of $A$ obtained by restricting to the first $d$ rows.
Given $F \subseteq \{1,\dots,n\}$, we let $A^F$ denote the submatrix of $A$ consisting of the columns indexed by $F$.

\begin{defn}\label{defn: flag matroid from matrix}
    Let $\mathbb{K}$ be a field, suppose $A \in \mathbb{K}^{r\times n}$ and let $1 \le d_1 < \dots < d_k =~r$ be integers.
    Define $\mathfrak{F}(A; d_1, \dots, d_k)$ to be the flag matroid with sequential representation
    \[
        (M(A_{\le d_1}),\dots,M(A_{\le d_k}))
    \]
    When $k=r$ and $d_i = i$ for all $i$, we denote this as $\mathfrak{F}(A)$.
\end{defn}

Flag matroids expressible as $\mathfrak{F}(A; d_1,\dots,d_k)$ for some matrix $A$ with entries in $\mathbb{K}$ are called \emph{$\mathbb{K}$-representable}.
The feasible sets of $\mathfrak{F}(A;d_1,\dots,d_k)$ are the subsets $F \subseteq \{1,\dots,n\}$ with cardinality equal to some $d_i$
such that $A_{\leq |F|}^F$ is nonsingular.
If $(L_1,\dots,L_k)$ is a flag in a $\mathbb{K}$-vector space $V$ with basis $E$
and $A$ is any matrix whose first $\dim(L_i)$ rows are a basis of $L_i$ in $E$ coordinates,
then $(M(L_1),\dots,M(L_k))$ is the sequential representation of $\mathfrak{F}(A;\dim(L_1),\dots,\dim(L_k))$.

\begin{ex}
    The flag matroid $\mathcal{I}(U_{2,3})$ is $\mathbb{K}$-representable for any $\mathbb{K}$ with cardinality $3$ or greater.
    Indeed, let $x \in \mathbb{K} \setminus \{0,1\}$ and define
    \begin{equation*}
        A:= \begin{pmatrix}
            1 & 1 & 1 \\
            0 & 1 & x
        \end{pmatrix}.
    \end{equation*}
    Then $(A;1,2)$ is a $\mathbb{K}$-representation of $\mathcal{I}(U_{2,3})$.
    However, $\mathcal{I}(U_{2,3})$ is not $\mathbb{F}_2$-representable.
    Indeed, since all singleton sets are feasible, the first row of any representation of $\mathcal{I}(U_{2,3})$
    can have no zeros, and since all two-element sets are independent, the second row can have no repeated entries.
\end{ex}

We call flag matroids of the form $\mathcal{I}(U_{r,n})$ \emph{uniform}.
The following Theorem characterizes representability of uniform flag matroids.

\begin{thm}\label{thm: uniform flag matroids representability}
    The flag matroid $\mathcal{I}(U_{r,n})$ is $\mathbb{K}$-representable if and only if $r\leq 1$ or $|\mathbb{K}| \geq n$.
\end{thm}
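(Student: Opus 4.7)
The plan is to handle the two directions separately, and within each to dispose of the trivial cases $r = 0$ and $r = 1$ first. A representation $A \in \mathbb{K}^{r\times n}$ of $\mathcal{I}(U_{r,n})$ is, by Definition~\ref{defn: flag matroid from matrix} together with Definition~\ref{defn: flag matroids from matroid}, exactly a matrix such that $M(A_{\le i}) = U_{i,n}$ for each $i = 1,\dots,r$; equivalently, every $i$-column submatrix of $A_{\le i}$ is nonsingular for $i = 1,\dots,r$. For $r \le 1$ the matrix $(1\ 1\ \cdots\ 1)$ (respectively the empty matrix for $r=0$) works over any $\mathbb{K}$.

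For the \emph{if} direction when $r \ge 2$ and $|\mathbb{K}| \ge n$, I would construct an explicit Vandermonde-type representation. Choose $n$ distinct elements $x_1,\dots,x_n \in \mathbb{K}$ and let $A$ be the $r \times n$ matrix with $A_{ij} = x_j^{i-1}$. For any $i$ and any $i$-subset $S = \{j_1,\dots,j_i\}$ of columns, $A_{\le i}^S$ is the standard Vandermonde matrix in $x_{j_1},\dots,x_{j_i}$, whose determinant is $\prod_{1 \le k < \ell \le i}(x_{j_\ell} - x_{j_k}) \ne 0$. Hence every required minor is nonzero, so $(A;1,2,\dots,r)$ represents $\mathcal{I}(U_{r,n})$.

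For the \emph{only if} direction, suppose $r \ge 2$ and $A \in \mathbb{K}^{r \times n}$ is a representation. Since every singleton is feasible, the $1 \times 1$ submatrices $A_{1j}$ are all nonzero, so each entry in row $1$ of $A$ is nonzero. Scaling column $j$ by $A_{1j}^{-1}$ (a nonzero scalar) leaves the matroid of each $A_{\le i}$ unchanged, so I may assume the first row of $A$ consists entirely of ones. Now for any two distinct columns $j,k$ the $2\times 2$ minor
\[
\det \begin{pmatrix} 1 & 1 \\ A_{2j} & A_{2k} \end{pmatrix} = A_{2k} - A_{2j}
\]
must be nonzero, so the $n$ entries $A_{21},\dots,A_{2n}$ of the second row are pairwise distinct elements of $\mathbb{K}$, forcing $|\mathbb{K}| \ge n$.

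I do not expect a significant obstacle here; the only subtlety is being precise that column scaling is an allowed reduction (it preserves each $M(A_{\le i})$ because nonzero column rescalings preserve linear independence relations), and that restricting attention to the first two rows already suffices to force $|\mathbb{K}| \ge n$ once $r \ge 2$.
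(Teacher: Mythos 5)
Your proposal is correct and follows essentially the same route as the paper's proof: the Vandermonde construction for the sufficiency when $|\mathbb{K}|\ge n$, and for necessity the observation that the first row must be nowhere zero (so can be scaled to all ones) and the second row must then have pairwise distinct entries. No gaps; your extra care about column scaling preserving each $M(A_{\le i})$ is a welcome precision the paper glosses over.
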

\begin{proof}
    The constant matrix of size $1\times n$, with each entry equal to $0$ (respectively $1$) is a representation of $U_{0,n}$ (respectively $U_{1,n}$) over any field.
    Suppose now $|\mathbb{K}| \geq n$.
    Let $e_1, e_2, \dots, e_n$ be distinct elements of $\mathbb{K}$ and define 
    \begin{equation*}
        A := \begin{pmatrix}
            1 & 1 & \cdots & 1 \\
            e_1 & e_2 & \cdots & e_n \\
            e_1^2 & e_2^2 & \cdots & e_n^2 \\
            \vdots & \vdots & \ddots & \vdots \\
            e_1^{r-1} & e_2^{r-1} & \dots & e_n^{r-1}
        \end{pmatrix}
    \end{equation*}
    If $F \subseteq \{1,\dots,n\}$ with $|F| \le r$,
    then $A_{1, 2, \dots, |F|}^F$ is a Vandermonde matrix and therefore nonsingular.
    Since $A$ has size $r\times n$, this implies that $\mathfrak{F}(A) = \mathcal{I}(U_{r,n})$.

    Now assume $\mathcal{I}(U_{r,n})$ is $\mathbb{K}$-representable and suppose $r > 1$.
    The first row of $A$ cannot contain any zero entries
    so by appropriately scaling each column we may assume that the first row of $A$ is all ones.
    Since all subsets of size 2 are feasible in $\mathcal{I}(U_{r,n})$,
    all entries of the second row of $A$ must be distinct and so $|\mathbb{K}| \ge n$.
\end{proof}

\subsection{Graphic Flag Matroids}
We now discuss a class of flag matroids analogous to the class of graphic matroids.
This class of matroids was discussed in greater generality using different language in~\cite[Section~7.4]{white1986theory}.
We begin by recalling some basic terminology regarding set partitions that will be necessary.

Given a set $S$, a \emph{partition} of $S$ is a set of disjoint subsets of $S$ whose union is $S$.
The sets in a partition are called \emph{cells}.
There is a partial order on the set of partitions of a set $S$.
In particular, given two partitions $\mathcal{P} = \{P_1,\dots,P_k\}$ and $\mathcal{Q} = \{Q_1,\dots,Q_l\}$ of $S$,
one says that $\mathcal{Q}$ is \emph{finer} than $\mathcal{P}$ (equivalently, $\mathcal{P}$ is \emph{coarser} than $\mathcal{Q}$)
if each $Q_i$ is a subset of some $P_j$.
We express this symbolically as $\mathcal{P} \succ \mathcal{Q}$.
A \emph{chain of partitions} is a sequence $\mathcal{P}_1,\dots,\mathcal{P}_k$ of partitions of a set $S$
such that $\mathcal{P}_i \succ \mathcal{P}_{i+1}$ for each $i$.

\begin{defn}\label{defn: graphicl flag matroids}
    Let $G = (V,E)$ be a graph and let $\mathcal{P} = \{P_1,\dots,P_k\}$ be a partition of $V$.
    For each $e = uv \in E$ define $e^P := P_i P_j$ where $u \in P_i$ and $v \in P_j$.
    For each $S \subseteq E$ define $S^P := \{e^P : e \in S\}$
    and define $M(G,\mathcal{P})$ to be the matroid on ground set $E$ where $S \subseteq E$ is independent
    if and only if the graph $(\mathcal{P},S^\mathcal{P})$ has no cycles.
\end{defn}

\begin{prop}\label{prop: graphic flag matroids are flag matroids}
    If $G = (V,E)$ is a graph and $\mathcal{P}_1 \succ \dots \succ \mathcal{P}_k$ is a chain of partitions of $V$,
    then $(M(G,\mathcal{P}_1),\dots,M(G,\mathcal{P}_k))$ is the sequential representation of a flag matroid.
    If $G$ is not connected, then there exists a connected graph $H$ and a sequence of partitions $(\mathcal{Q}_1,\dots,\mathcal{Q}_k)$ of the vertices of $H$ such that
    $M(G,\mathcal{P}_i) = M(H,\mathcal{Q}_i)$ for each $i$.
\end{prop}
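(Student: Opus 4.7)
The plan is to prove the two assertions separately.

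For the first assertion, each $M(G, \mathcal{P}_i)$ is the graphic matroid of the multigraph $G/\mathcal{P}_i$, hence is a matroid. To show $M(G, \mathcal{P}_{i+1})$ is a lift of $M(G, \mathcal{P}_i)$, I will verify clause (4) of Proposition~\ref{prop: equivalent characterizations of lifts}, namely that $\cl_{M(G, \mathcal{P}_{i+1})}(X) \subseteq \cl_{M(G, \mathcal{P}_i)}(X)$ for every $X \subseteq E$. The closure in the graphic matroid of a multigraph $\Gamma$ is the set of edges whose endpoints lie in a common connected component of the spanning subgraph $(V(\Gamma), X)$, and since $\mathcal{P}_{i+1}$ refines $\mathcal{P}_i$ the multigraph $G/\mathcal{P}_i$ is obtained from $G/\mathcal{P}_{i+1}$ by further vertex identifications; two vertices lying in a common component in the finer quotient therefore remain so after further identification, so the inclusion follows.

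For the second assertion I induct on the number $m$ of connected components of $G$, with base case $m = 1$ handled by $H = G$ and $\mathcal{Q}_i = \mathcal{P}_i$. In the inductive step I will produce vertices $u \in C_l$ and $u' \in C_{l'}$ in distinct components such that identifying them yields a graph $G'$ with $m - 1$ components together with pushforward partitions $\mathcal{P}_i'$ satisfying $M(G', \mathcal{P}_i') = M(G, \mathcal{P}_i)$. The key fact is that the cycle matroid of a multigraph is preserved when identifying two vertices lying in distinct connected components. Applied to each $G/\mathcal{P}_i$, this means the pair must satisfy: for every $i$, either $u$ and $u'$ belong to the same cell of $\mathcal{P}_i$ (so no identification occurs in the quotient), or the cells containing them lie in different connected components of $G/\mathcal{P}_i$.

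The main technical point is the existence of such a pair. For each $i$ define the auxiliary graph $\Lambda_i$ on vertex set $\{C_1, \ldots, C_m\}$ with an edge $C_a C_b$ whenever some cell of $\mathcal{P}_i$ meets both $C_a$ and $C_b$; refinement removes edges so $\Lambda_1 \supseteq \cdots \supseteq \Lambda_k$. Because each $C_j$ is connected in $G$, a careful unpacking of the definitions shows that two cells of $\mathcal{P}_i$ meeting $C_a$ and $C_b$ respectively lie in the same connected component of $G/\mathcal{P}_i$ precisely when $C_a$ and $C_b$ lie in the same connected component of $\Lambda_i$; this correspondence is the delicate step of the argument. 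Let $j^*$ be the largest index such that $\Lambda_{j^*}$ contains an edge, or $j^* = 0$ if no such index exists. When $j^* > 0$, pick an edge $C_l C_{l'}$ of $\Lambda_{j^*}$ witnessed by a cell $P^* \in \mathcal{P}_{j^*}$, and set $u \in P^* \cap C_l$, $u' \in P^* \cap C_{l'}$; when $j^* = 0$, pick any vertices from any two components. For $i \leq j^*$ the cell of $\mathcal{P}_i$ containing $P^*$ contains both $u$ and $u'$, while for $i > j^*$ the graph $\Lambda_i$ has no edges, so $C_l$ and $C_{l'}$ lie in different components of $\Lambda_i$ and by the correspondence the cells of $\mathcal{P}_i$ containing $u$ and $u'$ lie in different components of $G/\mathcal{P}_i$. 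This verifies the safety condition, after which forming $G'$ and applying the induction hypothesis completes the proof.
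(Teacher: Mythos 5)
Your proof is correct. For the first assertion you take essentially the paper's route: verify clause (4) of Proposition~\ref{prop: equivalent characterizations of lifts} by observing that the coarser quotient graph is obtained from the finer one by further vertex identifications, so connectivity (equivalently, the existence of a connecting path) is preserved; the paper phrases this with explicit paths rather than components, but the content is identical.

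For the second assertion your route is genuinely different and, in fact, more careful than the paper's. The paper picks one vertex from each component of $G$, glues them all into a single vertex in one step, and asserts that each quotient graph $(\mathcal{Q}_i,E^{\mathcal{Q}_i})$ arises from $(\mathcal{P}_i,E^{\mathcal{P}_i})$ by identifying vertices lying in \emph{different} connected components. That assertion needs justification when a cell of some $\mathcal{P}_i$ straddles two components of $G$: for example, if $G$ consists of two disjoint edges $ab$ and $cd$ and $\mathcal{P}_1=\{\{a,c\},\{b\},\{d\}\}$, then $(\mathcal{P}_1,E^{\mathcal{P}_1})$ is a path and is connected, and gluing the representatives $a$ and $d$ turns $cd$ into a loop, changing the matroid --- so the representatives must be chosen with care. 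Your induction on the number of components, together with the auxiliary graphs $\Lambda_i$ and the choice of the pair $u,u'$ via the last index $j^*$ at which some $\Lambda_{j^*}$ still has an edge, supplies exactly the missing selection argument: for $i\le j^*$ the two vertices share a cell (coarsening preserves this), and for $i>j^*$ your correspondence between components of $\Lambda_i$ and components of $G/\mathcal{P}_i$ (which I checked and which does hold, since each $C_j$ has connected image in the quotient and each quotient edge lives inside a single $C_j$) guarantees the identification is across distinct components of the quotient, hence matroid-preserving. The cost of your approach is length; what it buys is a complete treatment of the straddling-cell case that the paper's one-shot gluing passes over. The only points left implicit, both routine, are that the pushforward partitions $\mathcal{P}_i'$ still form a chain and that $G'/\mathcal{P}_i'$ is obtained from $G/\mathcal{P}_i$ by the single safe identification.
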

\begin{proof}
    Let $\cl_i$ and $\cl_{i+1}$ be the closure operators of $M(G, \mathcal{P}_i)$ and $M(G, \mathcal{P}_{i+1})$ respectively. In light of Proposition~\ref{prop: equivalent characterizations of lifts},
    it suffices to show that for $S \subseteq E$, $\cl_{i+1}(S) \subseteq \cl_i(S)$. So let $e \in \cl_{i+1}(S)$.
    Then the graph $(\mathcal{P}_{i+1},S^{\mathcal{P}_{i+1}}_{i+1})$ has a path $L = (e_1^{\mathcal{P}_{i+1}},\dots,e_l^{\mathcal{P}_{i+1}})$ connecting the endpoints of $e^{\mathcal{P}_{i+1}}$.
    Then $e \in \cl_{i}(S)$ because the path $e_1^{\mathcal{P}_{i}},\dots,e_l^{\mathcal{P}_i}$ in $(\mathcal{P}_{i},S^{\mathcal{P}_i}_{i})$
    connects the endpoints of $e^{\mathcal{P}_{i}}$ as it is obtained from $L$ by contracting edges.

    If $G$ is disconnected, let $H$ be a graph obtained from $G$ by choosing a vertex in each connected component of $G$ and identifying them together in a single vertex.
    Let $\mathcal{Q}_i$ be the partition of the vertices of $H$ obtained from $\mathcal{P}_i$
    by making the corresponding identifications.
    Let $E$ denote the edge set of $G$.
    Then each graph $(\mathcal{Q}_i,E^{\mathcal{Q}_i})$ is obtained from $(\mathcal{P}_i,E^{\mathcal{P}_i})$ by identifying vertices from different connected components.
    Then $M(G,\mathcal{P}_i) = M(H,\mathcal{Q}_i)$ for each $i$ because identifying vertices from different connected components does not change the matroid of a graph.
\end{proof}

\begin{defn}
    The class of flag matroids that can arise as in Proposition~\ref{prop: graphic flag matroids are flag matroids} are called \emph{graphic flag matroids}.
\end{defn}

\begin{ex}\label{ex: graphic flag matroid}
Let $G = K_4$, where we denote the vertices as $V = \{1, 2, 3, 4\}$ and edges as $E = \{a, b, c, d, e, f\}$. Define $\mathcal{P}_1 = 1234$, $\mathcal{P}_2 = 124|3$, $\mathcal{P}_3 = 12|3|4$, and $\mathcal{P}_4 = 1|2|3|4$. We can graphically represent the sequential representation of this flag matroid via the sequence of graphs shown in Figure~\ref{figure: graphic flag matroid example}.

\begin{figure}[H]
    \centering
\resizebox{\columnwidth}{!}
{
    \subfloat[$(\mathcal{P}_1, S^{\mathcal{P}_1})$]{
        \begin{tikzpicture}
            \vertex[fill] (1234) at (0,1) {};
  \node[]  at (1,0) {$1234$};
  \path
  (1234) edge[in = 60, out = 0, loop, looseness = 40] node[above] {$c$} (1234)
  (1234) edge[in = 120, out = 60, loop, looseness = 40] node[above] {$e$} (1234)
  (1234) edge[in = 180, out = 120, loop, looseness = 40] node[above] {$b$} (1234)
  (1234) edge[in = 240, out = 180, loop, looseness = 40] node[below] {$a$} (1234)
  (1234) edge[in = 300, out = 240, loop, looseness = 40] node[below] {$d$} (1234)
  (1234) edge[in = 360, out = 300, loop, looseness = 40] node[right, pos = 0.7] {$f$}(1234)
  ;
        \end{tikzpicture}
    }
    \subfloat[$(\mathcal{P}_2, S^{\mathcal{P}_2})$]{
    \begin{tikzpicture}
            \vertex[fill] (124) at (0,1) {};
  \vertex[fill] (3) at (2,1) [label=above:$3$] {};
  \node[] at (0.60, 0) {$124$};
  \path
  (124) edge node[above, pos=0.4] {$f$} (3)
  (124) edge[bend left = 70] node[above, pos=0.7] {$c$} (3)
  (124) edge[bend right = 70] node[below, pos=0.8] {$d$} (3)
  (124) edge[in = 120, out=60, loop, looseness = 40] node[above] {$e$} (124)
  (124) edge[in = 210, out = 150, loop, looseness = 40] node[left] {$b$} (124)
  (124) edge[in = 240, out=300, loop, looseness = 40] node[below] {$a$} (124)
  ;
        \end{tikzpicture}
    }
    \subfloat[$(\mathcal{P}_3, S^{\mathcal{P}_3})$]{
        \begin{tikzpicture}
             \vertex[fill] (12) at (0,1) {};
  \vertex[fill] (4) at (2,2) [label=above:$4$] {};
  \vertex[fill] (3) at (2,0) [label=below:$3$] {};
  \node[] at (0, 0.4) {$12$};
\path 
(12) edge node[below] {$e$} (4)
(12) edge node[above] {$f$} (3)
(12) edge[out=220, loop, looseness = 34] node[above, pos = 0.75]{$a$} (12)
(4) edge node[right] {$c$} (3)
(12) edge[bend left] node[above] {$b$} (4)
(12) edge[bend right] node[below] {$d$} (3)
;
        \end{tikzpicture}
    }
    \subfloat[$(\mathcal{P}_4, S^{\mathcal{P}_4})$]{
        \begin{tikzpicture}
            \vertex[fill] (2) at (8,0) [label=below:$2$] {};
  \vertex[fill] (3) at (10,0) [label=below:$3$] {};
  \vertex[fill] (4) at (10,2) [label=above:$4$] {};
  \vertex[fill] (1) at (8,2) [label=above:$1$] {};
\path
(1) edge node[left] {$a$} (2) 
(1) edge node[above] {$b$} (4)
(1) edge node[above, pos=0.40] {$f$} (3)
(2) edge node[below, pos=0.33] {$e$} (4)
(2) edge node[below] {$d$} (3)
(4) edge node[right] {$c$} (3)
;
        \end{tikzpicture}
    }
}
\caption{Graphic representation of $\mathfrak{F}(K_4, \mathcal{P}_1, \mathcal{P}_2, \mathcal{P}_3, \mathcal{P}_4)$}\label{figure: graphic flag matroid example}
\end{figure}
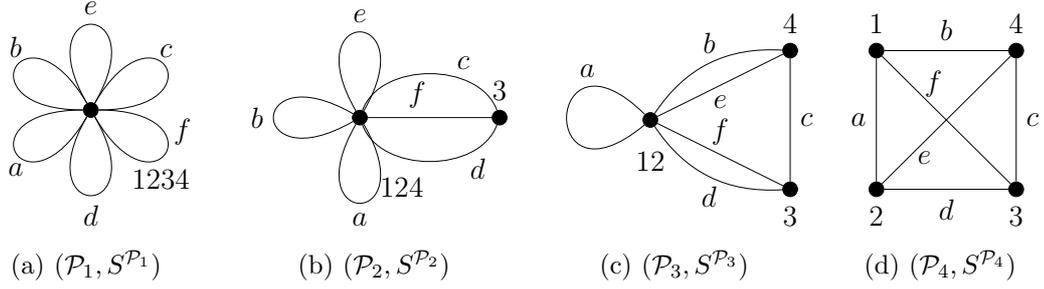

\end{ex}

\subsection{Minors and duality}
We now discuss minors and duality for flag matroids.
Such concepts have already been defined for flag matroids as sequences of matroid lifts
in~\cite{brandt2021tropical} and~\cite{garcia2018exploring}.
Our contribution in this section is to extend this same minor and duality theory to our view of a flag matroid as a set system.

Our motivation for studying flag matroid minors is to provide a framework that can be used
to characterize certain classes of flag matroids.
We will show that all minors of graphic flag matroids are graphic,
and that all minors of $\mathbb{K}$-representable flag matroids are $\mathbb{K}$-representable.
In principle, this allows us to characterize each such class in terms of forbidden minors,
and this is something we explore for $\mathbb{K}$-representable flag matroids in the next section.
For now, we begin by defining flag matroid minors and duality.

\begin{defn}\label{defn: flag matroid minors and duality}
    Let $\mathfrak{F} = (E,\mathcal{F})$ be a flag matroid and $e \in E$.
    Then, define
    \begin{enumerate}
        \item the \emph{deletion} of $\mathfrak{F}$ by $e$ to be \ \ \ $\mathfrak{F} \setminus e := (E\setminus \{e\},\{F \in \mathcal{F} : e \notin F\})$
        \item the \emph{dual} of $\mathfrak{F}$ to be \ \ \ $\mathfrak{F}^* := (E,\{E \setminus F : F \in \mathcal{F}\})$
        \item the \emph{contraction} of $\mathfrak{F}$ by $e$ to be \ \ \ $\mathfrak{F} / e := (\mathfrak{F}^*\setminus e)^*$
        \item the \emph{ith chopping} of $\mathfrak{F}$ to be \ \ \ $C_{-i}(\mathfrak{F}) := (E,\{F \in \mathcal{F} : |F| \neq i\})$
    \end{enumerate}
    A flag matroid obtained from $\mathfrak{F}$ via a sequence of deletion, contraction, and chopping operations is called a \emph{minor} of $\mathfrak{F}$.
\end{defn}

The next proposition tells us that taking minors and duals of flag matroids corresponds to analogous operations on the constituent matroids of the sequential representation.

\begin{prop}\label{prop: minors sequential representation}
    Let $\mathfrak{F} = (E, \mathcal{F})$ be a flag matroid and let $e \in E$. Suppose $\mathfrak{F}$ has as sequential representation $\mathcal{M} = (M_1, M_2, \dots, M_r)$. Then 
    \begin{enumerate}
        \item $\mathfrak{F}\setminus e$ has as sequential representation $(M_1\setminus e, M_2\setminus e, \dots, M_r\setminus e)$ 
        \item $\mathfrak{F}^*$ has as sequential representation $ (M_{r}^*, M_{r-1}^*, \dots, M_1^*)$ 
        \item $\mathfrak{F}/e$ has as sequential representation $(M_1 /e, M_2/e, \dots, M_r/e)$
        \item $C_{-i}(\mathfrak{F})$ has as sequential representation $(M_1, M_2, \dots, M_{i-1}, M_{i+1}, \dots, M_r)$
    \end{enumerate}
\end{prop}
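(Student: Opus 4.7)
The plan is to verify each of the four equalities by comparing, at each cardinality, the feasible sets on the left-hand side to the bases of the correspondingly indexed matroid on the right, and then separately checking that the lift relations between consecutive matroids are preserved. I would handle the parts in the order (1), (2), (4), (3), since (3) can be derived formally from (1) and (2) via the defining relation $\mathfrak{F}/e = (\mathfrak{F}^* \setminus e)^*$.

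For \textbf{deletion} (1), the feasible sets of $\mathfrak{F}\setminus e$ of cardinality $j_i$ are by definition the bases of $M_i$ not containing $e$, which coincide with the bases of $M_i\setminus e$ per the matroid deletion recipe in Section~\ref{section: matroid background}. To preserve the lift structure, I would use Proposition~\ref{prop: equivalent characterizations of lifts}(3): writing $M_i = Q/X$ and $M_{i+1} = Q\setminus X$ for a common matroid $Q$ with $e\notin X$, the commutations $(Q/X)\setminus e = (Q\setminus e)/X$ and $(Q\setminus X)\setminus e = (Q\setminus e)\setminus X$ show that $M_{i+1}\setminus e$ is again a lift of $M_i\setminus e$. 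For \textbf{duality} (2), the complementation $F\mapsto E\setminus F$ restricts at each level to the standard bijection between bases of $M_i$ and bases of $M_i^*$, so the feasible sets of $\mathfrak{F}^*$ of size $|E|-j_i$ are precisely the bases of $M_i^*$; since the sizes reverse, the sequence becomes $(M_r^*,\dots,M_1^*)$, and Proposition~\ref{prop: equivalent characterizations of lifts}(2) provides the lift relation between consecutive terms.

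For \textbf{chopping} (4), removing feasible sets of cardinality $j_i$ removes exactly the bases of $M_i$ while leaving the other strata of the set system untouched, yielding the sequence $(M_1,\dots,M_{i-1},M_{i+1},\dots,M_r)$. The only new thing to check is that $M_{i+1}$ is a lift of $M_{i-1}$, which I would verify by transitivity via the flat-containment characterization in Proposition~\ref{prop: equivalent characterizations of lifts}(4): $\cl_{M_{i+1}}(X)\subseteq\cl_{M_i}(X)\subseteq\cl_{M_{i-1}}(X)$ for all $X\subseteq E$. For \textbf{contraction} (3), I would combine (1) and (2) with the termwise matroid identity $M/e = (M^*\setminus e)^*$: dualizing $\mathfrak{F}$ yields $(M_r^*,\dots,M_1^*)$ by (2), then deleting $e$ yields $(M_r^*\setminus e,\dots,M_1^*\setminus e)$ by (1), and then dualizing again yields $((M_1^*\setminus e)^*,\dots,(M_r^*\setminus e)^*) = (M_1/e,\dots,M_r/e)$, matching $\mathfrak{F}/e = (\mathfrak{F}^*\setminus e)^*$.

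The main technical subtlety lies in part (1): when $e$ is a coloop of some $M_i$, there are no bases of $M_i$ avoiding $e$, but $M_i\setminus e$ still has well-defined bases of smaller size. Coloops propagate up the lift sequence, since the hyperplane $E\setminus\{e\}$ of $M_i$ remains a flat of $M_{i+1}$ by Proposition~\ref{prop: equivalent characterizations of lifts}(4), and a short rank comparison (using pointwise $\rank_{M_{i+1}}\ge \rank_{M_i}$, which follows from submodularity in the $M = Q/X$, $N = Q\setminus X$ presentation) forces it to remain a hyperplane of $M_{i+1}$. In that regime, consecutive terms $M_{j-1}\setminus e$ and $M_j\setminus e$ may coincide, yielding a trivial lift in the listed sequence; this matches the absence of a feasible set of the corresponding cardinality in $\mathfrak{F}\setminus e$, but requires either interpreting ``sequential representation'' in the proposition as permitting trivial lifts or collapsing consecutive duplicates to recover the strict sequential representation of Definition~\ref{defn: lifts and flags}.
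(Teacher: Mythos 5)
Your proposal follows essentially the same route as the paper: identify the feasible sets of each minor stratum-by-stratum with the bases of the corresponding matroid, obtain (3) formally from (1) and (2) via $\mathfrak{F}/e = (\mathfrak{F}^*\setminus e)^*$, and treat (4) as immediate from the definition of chopping. You are somewhat more careful than the paper in two places: you explicitly verify that the lift relations survive each operation (the paper leaves this implicit, relying on Theorem~\ref{thm: flag matroid cryptomorphism}), and you flag the coloop degeneration in part (1). That second point is a genuine observation: when $e$ is a coloop of some $M_i$ (hence, as you argue, of all $M_j$ with $j\ge i$), a basis of $M_i\setminus e$ need not arise as a feasible set of $\mathfrak{F}\setminus e$, so the paper's chain of equivalences ``$F$ is a basis of $M_{|F|}$ and $F\subseteq E\setminus e$ iff $F$ is a basis of $M_{|F|}\setminus e$'' breaks down, and the listed sequence can contain consecutive equal terms (e.g.\ $(U_{1,2},U_{2,2})$ deleting a coloop yields $(U_{1,1},U_{1,1})$). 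The proposition is then only correct under the convention you describe of collapsing consecutive duplicates (with the dual caveat for contracting a loop in part (3)); your handling of this is sound and fills a gap the paper's proof elides.
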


\begin{proof}
    The fourth claim follows immediately from the definition of chopping.
    The third claim follows immediately from the first two.
    
    For the first claim, note that $F$ is feasible in $\mathfrak{F}\setminus e$ if and only if $F$ is feasible in $\mathfrak{F}$ and $F \subseteq E\setminus e$, if and only if $F$ is a basis of $M_{|F|}$ in $(M_1, M_2, \dots, M_r)$ and $F\subseteq E\setminus e$, if and only if $F$ is a basis of $M_i\setminus e$ in $(M_1\setminus e, M_2\setminus e, \dots, M_r\setminus e)$.

    For the second claim, note that $F$ is feasible in $\mathfrak{F}^*$ if and only if $E\setminus F$ is feasible in $\mathfrak{F}$, if and only if $E\setminus F$ is a basis of $M_{|E\setminus F|}$ in $(M_1, M_2, \dots, M_r)$, if and only if $F$ is a basis of $M_{|E\setminus F|}^*$ in $(M_r^*, M_{r-1}^*, \dots, M_1^*)$.
\end{proof}

In light of Theorem~\ref{thm: flag matroid cryptomorphism}, Proposition~\ref{prop: minors sequential representation} tells us that the class of flag matroids is closed under taking minors, i.e.~that a minor of a flag matroid is again a flag matroid. 
We now state properties of minors of flag matroids that mimic properties of minors for matroids.

\begin{prop}\label{prop: minors commute}
    Let $\mathfrak{F} = (E, \mathcal{F})$ be a flag matroid and let $X, Y \subseteq E$ such that $X$ and $Y$ are disjoint. Then
    \begin{enumerate}
        \item $\mathfrak{F}\setminus X \setminus Y = \mathfrak{F}\setminus (X \cup Y) = \mathfrak{F}\setminus Y \setminus X$
        \item $\mathfrak{F}/ X / Y = \mathfrak{F}/ (X \cup Y) = \mathfrak{F}/ Y /X$
        \item $\mathfrak{F}\setminus X / Y = \mathfrak{F}/ Y \setminus X$
        \item $(\mathfrak{F}^*)^* = \mathfrak{F}$
    \end{enumerate}
\end{prop}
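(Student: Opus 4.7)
The plan is to reduce each of the four identities to a standard matroid-theoretic identity applied componentwise to the sequential representation, via Proposition~\ref{prop: minors sequential representation}. Since the sequential representation $(M_1,\dots,M_r)$ of $\mathfrak{F}$ determines $\mathfrak{F}$ (its feasible sets are the union of the bases of the $M_i$), showing that both sides of each claimed equality yield the same sequential representation is enough.

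A preliminary step is to extend the single-element operations of Definition~\ref{defn: flag matroid minors and duality} to operations indexed by finite subsets of $E$. For deletion, a direct check from the definition shows that $\mathfrak{F}\setminus e\setminus f$ has feasible sets $\{F\in\mathcal{F}:\{e,f\}\cap F=\emptyset\}$, independent of the order of $e$ and $f$; iterating yields a well-defined $\mathfrak{F}\setminus X$ whose sequential representation is $(M_1\setminus X,\dots,M_r\setminus X)$ by repeated application of Proposition~\ref{prop: minors sequential representation}(1). Claim (4) is immediate from $E\setminus(E\setminus F)=F$, and with (4) in hand one defines $\mathfrak{F}/X:=(\mathfrak{F}^*\setminus X)^*$ to obtain a well-defined contraction by sets, with sequential representation $(M_1/X,\dots,M_r/X)$ by Proposition~\ref{prop: minors sequential representation}(2)--(3).

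With this framework, (1) reduces to the matroid identity $M\setminus X\setminus Y=M\setminus(X\cup Y)=M\setminus Y\setminus X$ applied to each $M_i$; (2) is the analogous statement for contraction; and (3) follows from the well-known matroid identity $(M\setminus X)/Y=(M/Y)\setminus X$ for disjoint $X,Y$ (see e.g.~\cite[Section~3.1]{oxley}). In each case, equality at the matroid level on every term of the sequential representation propagates to equality at the flag matroid level.

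The main obstacle is essentially bookkeeping: carefully extending the single-element operations to set-indexed ones and verifying that Proposition~\ref{prop: minors sequential representation} translates these faithfully to componentwise matroid operations. A slightly subtle point is that (3) does not seem to admit a clean formal derivation from (1), (2), and (4) alone -- one attempt reduces $(\mathfrak{F}^*/X)\setminus Y=(\mathfrak{F}^*\setminus Y)/X$ to (3) applied to $\mathfrak{F}^*$, which is circular -- so the reduction to the matroid-level identity via the sequential representation is essential there. Once the bookkeeping is settled, each claim is a direct translation of a standard matroid fact.
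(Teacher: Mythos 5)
Your proposal is correct and follows the same route as the paper, which simply observes that all four identities follow from Theorem~\ref{thm: flag matroid cryptomorphism} and Proposition~\ref{prop: minors sequential representation} by reducing each claim to the corresponding componentwise matroid identity on the sequential representation. Your additional bookkeeping (extending single-element operations to set-indexed ones, and noting that (3) genuinely needs the matroid-level identity $(M\setminus X)/Y=(M/Y)\setminus X$ rather than a formal derivation from the other parts) just makes explicit what the paper leaves implicit.
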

\begin{proof}
    These immediately follow from Theorem~\ref{thm: flag matroid cryptomorphism} and Proposition~\ref{prop: minors sequential representation}.
\end{proof}

In light of Proposition~\ref{prop: minors commute}, we can write any minor of $\mathfrak{F}$ as $\mathfrak{F}/X\setminus Y$, where $X, Y \subseteq E$ and are disjoint. 

Similar to the matroid setting, $\mathbb{K}$-representable flag matroids are closed under taking minors and duals.
This was shown for a generalization of flag matroids in~\cite{jarra2024flag},
but is instructive to have a proof for the special case of flag matroids which we  now provide.

\begin{theorem}[{c.f.~\cite[Theorems 2.13 and 2.15]{jarra2024flag}}]\label{thm: minors and duals of representable}
    Minors and duals of $\mathbb{K}$-representable flag matroids are $\mathbb{K}$-representable.
\end{theorem}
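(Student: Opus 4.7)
The plan is to work in the subspace picture introduced in the remark following Example~\ref{ex: fano plane}: a $\mathbb{K}$-representable flag matroid on ground set $E$ of size $n$ corresponds to a flag $L_1 \subsetneq L_2 \subsetneq \cdots \subsetneq L_k$ of subspaces of $\mathbb{K}^E$ whose sequential representation is $(M(L_1),\dots,M(L_k))$. Stacking a compatible system of bases for the $L_i$ as rows recovers a matrix $A$ with $\mathfrak{F} = \mathfrak{F}(A;\dim L_1,\dots,\dim L_k)$, so the real content of the theorem is to show that each basic operation in Definition~\ref{defn: flag matroid minors and duality} can be realized at the level of flags of subspaces. By Proposition~\ref{prop: minors commute} together with $\mathfrak{F}/e = (\mathfrak{F}^* \setminus e)^*$, it suffices to handle chopping, duality, and deletion.

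Chopping $C_{-i}$ is immediate: drop $\dim L_i$ from the list of dimensions while keeping the same matrix. For duality, apply the standard identity $M(L)^* = M(L^\perp)$ to Proposition~\ref{prop: minors sequential representation}(2) to see that $\mathfrak{F}^*$ corresponds to the flag $L_k^\perp \subsetneq L_{k-1}^\perp \subsetneq \cdots \subsetneq L_1^\perp$ in $\mathbb{K}^E$; nestedness of this flag is automatic from $L_i \subseteq L_{i+1}$. A representing matrix for $\mathfrak{F}^*$ is then obtained by choosing a basis of $L_1^\perp$ whose first $n - \dim L_i$ vectors form a basis of $L_i^\perp$ for each $i$ and stacking these vectors as rows.

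For deletion by $e \in E$, let $\pi \colon \mathbb{K}^E \to \mathbb{K}^{E \setminus \{e\}}$ be the coordinate projection and set $L_i' := \pi(L_i)$. The independent-set description in the subspace picture gives $M(L_i') = M(L_i) \setminus e$, matching Proposition~\ref{prop: minors sequential representation}(1). The nested chain $L_1' \subseteq L_2' \subseteq \cdots \subseteq L_k'$ may acquire some non-strict inclusions, but after deduplicating it becomes a bona fide flag, which is again represented by a matrix via the basis-extension construction used in the duality step. Contraction follows by composing the deletion and duality constructions.

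The main subtlety is verifying that the deduplication of the projected flag matches the deduplication of the deleted sequential representation, so that the resulting matrix genuinely represents $\mathfrak{F} \setminus e$. The key observation is that $e$ is a coloop of $M(L)$ exactly when the $e$-th coordinate vector of $\mathbb{K}^E$ lies in $L$, which forces $\pi(L_i) = \pi(L_{i+1})$ to occur only when this coordinate vector lies in $L_{i+1} \setminus L_i$; in that case $\dim L_{i+1} = \dim L_i + 1$ and $M(L_i) \setminus e = M(L_{i+1}) \setminus e$, so the two deduplications agree. Once this bookkeeping is in place, all three operations produce explicit $\mathbb{K}$-representations of the corresponding minors, establishing the theorem.
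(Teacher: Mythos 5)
Your proposal is correct and follows essentially the same route as the paper's proof: reduce to chopping, deletion, and duality (with contraction obtained as $(\mathfrak{F}^*\setminus e)^*$), realize deletion by dropping the coordinate $e$ (the paper deletes a column of $A$, you project the row spaces, which is the same thing), and realize duality by a nested choice of bases for the orthogonal complements/kernels. The one place you go beyond the paper is the bookkeeping for when $\pi(L_i)=\pi(L_{i+1})$, i.e.\ when consecutive terms of the deleted sequential representation coincide and must be merged; your coloop criterion handles this correctly, whereas the paper leaves it implicit.
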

\begin{proof}
    Let $\mathfrak{F}$ be an arbitrary $\mathbb{K}$-representable flag matroid, then there is some $A\in \mathbb{K}^{n\times r}$ and sequence of integers $1 \le d_1 < \dots < d_k = r$ so that $\mathfrak{F} := \mathfrak{F}(A;d_1,\dots,d_r)$. 
    Any chopping of $\mathfrak{F}$ is of the form $\mathfrak{F}(A;d_{i_1},\dots,d_{i_k})$ and therefore $\mathbb{K}$-representable.
    Any deletion of $\mathfrak{F}$ is of the form $\mathfrak{F}(B;d_1,\dots,d_r)$ where $B$ is obtained from $A$ by removing a column.
    
    By definition of contraction, it now suffices to show $\mathfrak{F}^*$ is $\mathbb{K}$-representable.
    Since the kernel of each $A_{\le d_i}$ lies in the kernel of $A_{\le d_{i-1}}$,
    we can choose $x_1,\dots,x_{n-d_1} \in \mathbb{K}^n$ such that
    $x_1,\dots,x_{n-d_i}$ is a basis for the kernel of $A_{\le d_i}$ for each $i$.
    Let $B$ denote the matrix whose rows are $x_1,\dots,x_{n-d_1}$.
    Then $A_{\le d_i} (B_{\le n-d_i})^T = 0$.
    It then follows from \cite[Theorem 2.2.8]{oxley}
    that $M(A_{\le d_i})^* = M(B_{\le n-d_i})$ for each $i$.
    Proposition~\ref{prop: minors sequential representation} then implies that $\mathfrak{F}^* = \mathfrak{F}(B;n-d_k,\dots,n-d_1)$.
\end{proof}

\begin{theorem}\label{thm: minors of graphic}
    Every minor of a graphic flag matroid is graphic. 
\end{theorem}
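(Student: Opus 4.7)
The plan is to observe that by Proposition~\ref{prop: minors commute}, every minor can be obtained by a sequence of single-element deletions, contractions, and choppings, so it suffices to prove that each of these three operations sends a graphic flag matroid to a graphic one. Starting from a representation $(M(G,\mathcal{P}_1),\ldots,M(G,\mathcal{P}_k))$ furnished by Proposition~\ref{prop: graphic flag matroids are flag matroids}, I will exhibit for each operation a graph $G'$ and a chain of partitions $\mathcal{Q}_1 \succ \cdots \succ \mathcal{Q}_\ell$ on $V(G')$ so that Proposition~\ref{prop: minors sequential representation} identifies the resulting sequential representation with $(M(G',\mathcal{Q}_1),\ldots,M(G',\mathcal{Q}_\ell))$.

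Chopping is immediate: deleting one partition from a chain of partitions yields a chain. For deletion by $e \in E(G)$, take $G' := G \setminus e$ with the same partitions $\mathcal{P}_i$; since the subsets of $E\setminus\{e\}$ whose image is acyclic in $(\mathcal{P}_i, S^{\mathcal{P}_i})$ are by definition the independent sets of both $M(G,\mathcal{P}_i)\setminus e$ and $M(G\setminus e,\mathcal{P}_i)$, these matroids coincide. The substantive work is contraction. For $e = uv \in E(G)$, let $G' := G/e$ identify $u$ and $v$ to a new vertex $w$, and for each $i$ let $A_i,B_i \in \mathcal{P}_i$ be the cells containing $u,v$; define $\mathcal{Q}_i$ to be $\mathcal{P}_i$ with the cells $A_i$ and $B_i$ replaced by the single cell $(A_i \cup B_i)\setminus\{u,v\} \cup \{w\}$ (if $A_i = B_i$, this is just a relabeling).

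Two things must be checked. First, $\mathcal{Q}_1 \succ \cdots \succ \mathcal{Q}_k$: cells of $\mathcal{Q}_{i+1}$ not containing $w$ coincide with cells of $\mathcal{P}_{i+1}$ avoiding $u,v$, and each such cell lies in a cell of $\mathcal{P}_i$ which is either itself a cell of $\mathcal{Q}_i$ or is $A_i$ or $B_i$, hence contained in the merged $w$-cell of $\mathcal{Q}_i$; and the $w$-cell of $\mathcal{Q}_{i+1}$ lies in the $w$-cell of $\mathcal{Q}_i$ because $A_{i+1}\subseteq A_i$ and $B_{i+1}\subseteq B_i$ by refinement. Second, $M(G,\mathcal{P}_i)/e = M(G',\mathcal{Q}_i)$: if $A_i = B_i$ then $e$ is a loop of $M(G,\mathcal{P}_i)$, so contraction equals deletion and both sides equal $M(G\setminus e, \mathcal{P}_i)$ after renaming; if $A_i \neq B_i$ then $e^{\mathcal{P}_i}$ is a non-loop of the ordinary graph $(\mathcal{P}_i, E^{\mathcal{P}_i})$, and both sides equal the cycle matroid of the graph obtained by identifying $A_i$ with $B_i$ and deleting $e$ from $(\mathcal{P}_i, E^{\mathcal{P}_i})$, using the standard equality $M(H)/e = M(H/e)$ for graphic matroids.

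The main obstacle is precisely this last step: finding a single construction $(G',\mathcal{Q}_i)$ that simultaneously realizes the contraction at every level of the chain, in the face of $e$ possibly being a loop in some $M(G,\mathcal{P}_i)$ and a genuine edge in others. Once one commits to contracting $e$ in $G$ and merging the $u$- and $v$-cells at every level, the loop and non-loop cases unify cleanly and the remaining verifications are routine.
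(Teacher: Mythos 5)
Your proposal is correct and follows essentially the same route as the paper: chopping drops one partition from the chain, deletion keeps the partitions on $G\setminus e$, and contraction passes to $G/e$ with the cells containing the two endpoints of $e$ merged at every level. The paper simply asserts these sequential representations without verification, so your checks of the chain condition and of $M(G,\mathcal{P}_i)/e = M(G/e,\mathcal{Q}_i)$ (including the loop case) only add detail to the same argument.
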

\begin{proof}
    Let $\mathfrak{F}$ be a graphic flag matroid. Then there is a graph $G = (V,E)$ and a chain of partitions $\mathcal{P}_1 \succ \dots \succ \mathcal{P}_r$ of $V$ such that the sequential representation of $\mathfrak{F}$ is $(M(G, \mathcal{P}_1), \dots, M(G, \mathcal{P}_r))$.
    Let $e \in E$ and let $D$ and $T$ denote the graphs obtained by, respectively, deleting and contracting $e$ in $G$.
    We then have the following sequential representations of $\mathfrak{F} \setminus e$ and $\mathfrak{F} / e$
    \[
        (M(D, \mathcal{P}_1), \dots, M(D, \mathcal{P}_k)) \qquad (M(T, \mathcal{Q}_1), \dots, M(T, \mathcal{Q}_k))
    \]
    where $\mathcal{Q}_i$ is the partition of $V$ obtained from $\mathcal{P}_i$ by taking the union of the parts containing the endpoints of $e$
    and replacing each such vertex with the new vertex of $T$.
    The $i$th chopping $C_{-i}(\mathfrak{F})$ has the following sequential representation
    \[
        (M(G, \mathcal{P}_1), \dots, M(G, \mathcal{P}_{i-1}), M(G, \mathcal{P}_{i+1}), \dots, M(G, \mathcal{P}_r)). \qedhere
    \]
\end{proof}

Just as with matroids, the dual of graphic flag matroid need not be graphic.
In particular, given a graph $G$, the dual graphic matroid $M(G)^*$ is graphic if and only if $G$ is planar~\cite{oxley}.
Therefore for any graph $G$, the dual flag matroid of $\mathcal{B}(M(G))$ is a graphic if and only if $G$ is planar.

\subsection{Majors of Flag Matroids}
Given any flag matroid $\mathfrak{F} = (E, \mathcal{F})$, it can be shown there exists a matroid $Q$ such that every collection of feasible sets of cardinality $i$ are precisely the bases of some minor of $Q$.
Equivalently, every matroid in the sequential representation of $\mathfrak{F}$ can be written as a minor of $Q$.
Such a $Q$ is known as a \emph{major} of $\mathfrak{F}$~\cite{kung1986strong}.

\begin{defn}[\cite{kung1986strong}]\label{defn: major}
    Let $\mathfrak{F}$ be a flag matroid on ground set $E$ with sequential representation $(M_1,\dots,M_k)$.
    A matroid $Q$ on ground set $E \cup X$ is a \emph{major} of $\mathfrak{F}$
    if $X$ is independent in $Q$ and there exists an ordered partition
    $X = X_1 \cup \dots \cup X_{k-1}$ such that the following holds for $i = 1,\dots,k$
    \[
        M_i = Q / (X_1 \cup \dots \cup X_{i-1}) \setminus (X_i \cup \dots \cup X_{k-1}).
    \]
\end{defn}

When working with a major of a flag matroid, it will be easier to work with
$I_i:=X_1 \cup \dots \cup X_{i-1}$ and $J_i:=X_i \cup \dots \cup X_{k-1}$ instead of the $X_i$'s directly.

\begin{ex}\label{ex: major example}
    Let $\mathfrak{F}$ be the flag matroid on $\{e_1,e_2,e_3\}$ such that every subset is feasible.
    The sequential representation of $\mathfrak{F}$ is $(U_{1,3}, U_{2,3}, U_{3,3})$.
    Then $U_{3,5}$ is a major of $\mathfrak{F}$.
    Indeed, if $\{e_1,\dots,e_5\}$ is the ground set of $U_{3,5}$, then
    \[
        U_{1,3} = U_{3,5} /\{e_4,e_5\} \qquad U_{1,3} = U_{3,5} \setminus e_4 / e_5 \qquad U_{3,3} = U_{3,5} \setminus \{e_4,e_5\}
    \]
\end{ex}

\begin{thm}\label{thm: majors exist}
    Every flag matroid has a major.
\end{thm}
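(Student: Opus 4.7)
The plan is to proceed by induction on $k$, the length of the sequential representation $(M_1,\dots,M_k)$. The base case $k=1$ is immediate: take $Q = M_1$ with $X = \emptyset$, and the required minor identity holds trivially. For the inductive step, apply the hypothesis to the truncated flag matroid with sequential representation $(M_2,\dots,M_k)$ to obtain a major $Q''$ on ground set $E \cup Y$, where $Y := X_2 \cup \dots \cup X_{k-1}$ is independent in $Q''$ and the major identities hold for each $M_i$ with $i \geq 2$; in particular $M_2$ is obtained by contracting all of $Y$ in $Q''$ and $M_k$ by deleting all of $Y$ in $Q''$. Separately, since $M_2$ is a lift of $M_1$, Proposition~\ref{prop: equivalent characterizations of lifts}(3) yields a matroid $P$ on ground set $E \cup X_1$ (with $X_1$ chosen disjoint from $E \cup Y$) such that $X_1$ is independent in $P$, $M_1 = P/X_1$, and $M_2 = P \setminus X_1$.

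The central task is then to construct a single matroid $Q$ on $E \cup X_1 \cup Y$ satisfying both $Q \setminus X_1 = Q''$ and $Q / Y = P$. Once such a $Q$ is produced, the required major identity for each $M_i$ follows by rewriting it as a minor of $Q''$ (for $i \geq 2$) or of $P$ (for $i = 1$) and applying Proposition~\ref{prop: minors commute} to commute the deletion and contraction operations. The independence of $X_1 \cup Y$ in $Q$ then follows from the independence of $Y$ in $Q \setminus X_1 = Q''$ together with the independence of $X_1$ in $Q / Y = P$.

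The main obstacle is the existence of this amalgamation $Q$. The natural compatibility check passes: both $(Q \setminus X_1)/Y$ and $(Q/Y) \setminus X_1$ must equal $Q''/Y = M_2 = P \setminus X_1$, which they do by construction of $Q''$ and $P$. One natural strategy is to specify $Q$ by a rank function such as $r_Q(S) = r_{Q''}(S \cap (E \cup Y)) + r_P(S \cap (E \cup X_1)) - r_{M_2}(S \cap E)$ and verify submodularity, exploiting the role of $M_2$ as the common ``interface'' between $Q''$ and $P$. Whether this direct modular amalgamation is submodular without further hypotheses, or whether one must instead appeal to a more careful construction such as an iterated principal extension built from the single-step lift factorizations guaranteed by Proposition~\ref{prop: equivalent characterizations of lifts}(3), is the crux of the argument.
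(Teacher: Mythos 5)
Your reduction is sound as far as it goes, but the argument stops exactly where the theorem's content begins. Note first that the paper gives no proof at all here --- it simply cites \cite[Section 8.2]{kung1986strong} --- so there is no in-paper argument to match; your inductive setup (factor the first lift via Proposition~\ref{prop: equivalent characterizations of lifts}(3), take a major $Q''$ of the shorter flag, then glue) is indeed the standard strategy behind Kung's construction. The genuine gap is the existence of the matroid $Q$ on $E \cup X_1 \cup Y$ with $Q \setminus X_1 = Q''$ and $Q/Y = P$. You explicitly leave this open, and it cannot be waved through: amalgams of matroids do not exist in general, and your situation is not even a classical amalgam, since $E$ carries \emph{different} matroids as a restriction of $Q''$ (namely $Q''\setminus Y$) and as a restriction of $P$ (namely $M_2 = Q''/Y$); the ``interface'' $M_2$ is a contraction on one side and a deletion on the other. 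Your candidate rank function does reproduce both boundary conditions ($Q\setminus X_1 = Q''$ and $Q/Y = P$ follow by direct computation), but you have verified neither the unit-increase property nor submodularity, and for elements of $E$ the increment is a signed sum of three indicator-type terms that is not obviously confined to $\{0,1\}$. Until that is proved, the theorem is not established.

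The way to close the gap (and essentially what Kung does) is to avoid the rank-function amalgam and instead build $Q$ one element at a time by single-element extensions: reduce to $|X_1|=1$ (or iterate), let $\mathcal{M}$ be the modular cut of $M_2 = Q''/Y$ determining the extension $P$ of $M_2$ by $x$, and lift $\mathcal{M}$ to the modular cut $\mathcal{M}' = \{F \text{ a flat of } Q'' : Y \subseteq F \text{ and } F \setminus Y \in \mathcal{M}\}$ of $Q''$; one checks $\mathcal{M}'$ is an up-set closed under intersections of modular pairs (modularity of a pair of flats containing $Y$ is preserved by contracting $Y$), and the resulting extension $Q$ of $Q''$ by $x$ satisfies $Q/Y = P$ by construction. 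With that lemma in hand, the rest of your bookkeeping (commuting deletions and contractions via Proposition~\ref{prop: minors commute}, and the independence of $X_1 \cup Y$ from $r_Q(X_1\cup Y) = r_Q(Y) + |X_1| = |Y| + |X_1|$) goes through. One smaller point: Proposition~\ref{prop: equivalent characterizations of lifts}(3) as stated does not assert that $X_1$ may be taken independent in $P$; this is true but needs a sentence (loops of $P$ in $X_1$ can be discarded since deletion and contraction of a loop agree).
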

\begin{proof}
    See~\cite[Section 8.2]{kung1986strong}.
\end{proof}

Majors of flag matroids need not be unique.
Indeed, Example~\ref{ex: major example} implies that $U_{3,5}$ is a major of $(U_{1,3}, U_{3,3})$.
However, the linear matroids of the following matrices are also majors.
They are not isomorphic to each other nor to $U_{3,5}$.
\begin{equation*}
    \begin{pmatrix}
        1 & 1 & 1 & 0 & 0 \\
        0 & 1 & 1 & 1 & 0 \\
        0 & 0 & 1 & 0 & 1
    \end{pmatrix} \text{ over $\mathbb{F}_2$} \quad \text{and} \quad 
    \begin{pmatrix}
        1 & 1 & 1 & 0 & 0 \\
        0 & 1 & 2 & 1 & 0 \\
        0 & 1 & 1 & 0 & 1 \\
    \end{pmatrix} \text{ over $\mathbb{F}_3$}
\end{equation*}

It was shown in~\cite[Lemma~2.1]{mundhe2019graphic} that a flag matroid of form $(M, N)$ is $\mathbb{F}_2$-representable if and only if it has a major that is $\mathbb{F}_2$-representable.
More generally, for any flag matroid $\mathbb{F}$ and any field $\mathbb{K}$, existence of a $\mathbb{K}$-representable major of $\mathfrak{F}$
is equivalent to $\mathbb{K}$-representability of $\mathfrak{F}$.
As with Theorem~\ref{thm: minors and duals of representable}, this was shown for a generalization of flag matroids in~\cite{jarra2024flag},
but is instructive to have a proof for the special case of flag matroids which we now provide.

\begin{theorem}[{\cite[Theorem 2.23 and Remark 2.24]{jarra2024flag}}]\label{thm: representable iff has representable major}
    A flag matroid is $\mathbb{K}$-representable if and only if it has a $\mathbb{K}$-representable major.
\end{theorem}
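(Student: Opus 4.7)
My plan is to prove both directions by matrix-algebraic manipulation, exploiting the following observation: if a matrix $A' \in \mathbb{K}^{r \times n}$ has a column $v$ equal to the standard basis vector $e_j \in \mathbb{K}^r$, then $M(A') / v$ is represented by the matrix obtained from $A'$ by deleting both column $v$ and row $j$, while $M(A') \setminus v$ is represented by simply removing column $v$. This observation is the bridge that lets me convert representations of $\mathfrak{F}$ into representations of a major, and vice versa.

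For the forward direction, I would start with a representation $\mathfrak{F} = \mathfrak{F}(A; d_1, \ldots, d_k)$ with $A \in \mathbb{K}^{r \times n}$ and $d_k = r$, then augment $A$ to $A' \in \mathbb{K}^{r \times (n + r - d_1)}$ by appending the standard basis vectors $e_{d_1+1}, \ldots, e_r$ as new columns. Setting $Q := M(A')$ and writing $X$ for the appended columns, partitioned into $X_i := \{e_{d_i+1}, \ldots, e_{d_{i+1}}\}$, I would verify that $X$ is independent in $Q$ and that, by the observation above, contracting $X_i \cup \ldots \cup X_{k-1}$ and then deleting $X_1 \cup \ldots \cup X_{i-1}$ eliminates rows $d_i+1, \ldots, r$ and the remaining appended columns, leaving exactly the matrix $A_{\le d_i}$ on the $E$-columns. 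Thus this minor of $Q$ equals $M_i$, so $Q$ is a $\mathbb{K}$-representable major of $\mathfrak{F}$.

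For the backward direction, I would start with a $\mathbb{K}$-representable major $Q$ of $\mathfrak{F}$ with ordered partition $X = X_1 \cup \ldots \cup X_{k-1}$. By analyzing the rank identities $\rank(M_i) = d_i$ together with the major structure, I would show that $\rank_Q(E) = d_k$, $|X| \ge d_k - d_1$, and each $|X_i| \ge d_{i+1} - d_i$, then reduce to the case of equality throughout so that $\rank(Q) = d_k$. Taking a representation $A' \in \mathbb{K}^{d_k \times (n + |X|)}$ of $Q$, I would use the independence of $X$ to apply row operations that simultaneously bring the columns of $X_i$ into the form $e_{d_i+1}, \ldots, e_{d_{i+1}}$. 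The submatrix $A$ consisting of the columns indexed by $E$ is then a representation of $\mathfrak{F}$: by the same row-elimination argument used in the forward direction, $M(A_{\le d_i}) = M_i$ for every $i$, and so $\mathfrak{F} = \mathfrak{F}(A; d_1, \ldots, d_k)$.

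The main technical hurdle is the reduction step in the backward direction: I need to argue that any ``excess'' elements of $X$ beyond $d_k - d_1$ can be stripped without disrupting the flag matroid. My plan is to show that such excess elements behave as coloops in the appropriate contractions of $Q$ and can therefore be removed using Proposition~\ref{prop: minors sequential representation} without changing any $M_i$. The remainder of each direction is essentially bookkeeping once the correspondence between row elimination and contracting standard basis vector columns is in hand.
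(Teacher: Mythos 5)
Your proposal takes essentially the same route as the paper's proof: the forward direction appends the standard basis columns $e_{d_1+1},\dots,e_r$ to a representation of $\mathfrak{F}$ to build a representable major, and the backward direction row-reduces a representation of the major so that the $X$-columns become standard basis vectors and reads off a representation of $\mathfrak{F}$ from the $E$-columns. The one point where you go beyond the paper is the reduction handling $|X| > d_k - d_1$ (the paper simply asserts $|X| = \rank(M_k)-\rank(M_1)$, which fails when $E$ does not span $Q$), and your coloop-stripping plan is sound: if some $x \in X_j$ satisfies $x \notin \cl_Q(E \cup X_{j+1} \cup \dots \cup X_{k-1})$, then $x$ is a coloop in every minor where it gets deleted and can instead be contracted without changing any $M_i$, while if no such $x$ exists then a downward induction on $j$ gives $X \subseteq \cl_Q(E)$ and hence no excess.
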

\begin{proof}
    Let $\mathfrak{F}$ be a $\mathbb{K}$-representable flag matroid with sequential representation $(M_1, \dots, M_k)$. Let $A$ be its $\mathbb{K}$-representation.
    Now, consider the new matrix obtained by adding an identity matrix of size $s\times s$, where $s = \text{rank}(M_k)-\text{rank}(M_1)$, to the bottom right corner and having a zero matrix in the upper right corner
    \begin{equation*}
        A' := \begin{pmatrix}
            \begin{array}{c|c}
                \multirow{3}{*}{\scalebox{1.5}{$A$}} & \mathlarger{\textbf{0}} \\
                 & \\
                & \textbf{$I_s$}
            \end{array}
        \end{pmatrix}
    \end{equation*}
    We now show that the linear matroid of $A'$, which we denote as $Q$, is a major of $\mathfrak{F}$.
    Let $E' := \{e_1, \dots, e_s\}$ be the set of columns on the right side of $A'$.
    Then $M_1=Q/E'$ and $M_k = Q\setminus E'$.
    For an arbitrary $M_i$, we have that $M_i = Q/K_i\setminus L_i$
    where $L_i = \{e_1,\dots,e_{d_i}\}$ and $K_i = \{e_{d_i+1},\dots,e_{d_k}\}$.

    Now suppose $Q$ is a $\mathbb{K}$-representable major for the flag matroid $\mathbb{K}$. We have that $M_1 = Q/X$ and $M_k = Q\setminus X$ for some $X \subseteq E(Q)$. Assuming $X$ is independent, the bottom-right $|X| \times |X|$ submatrix will be nonsingular, and therefore via row operations can be turned into an identity matrix of size $s \times s$, were $s = |X| = \text{rank}(M_k) - \text{rank}(M_1)$. We can then turn the upper-right corner above the identity matrix into the zero matrix then. As a result, we have a $\mathbb{K}$-representation of $Q$ that is of form
    \begin{equation*}
        D := \begin{pmatrix}
            \begin{array}{c|c}
                \multirow{3}{*}{\scalebox{1.5}{$C$}} & \mathlarger{\textbf{0}} \\
                 & \\
                & \textbf{$I_s$}
            \end{array}
        \end{pmatrix}
    \end{equation*}
    The matrix $C$ will be the $\mathbb{K}$-representation of the flag matroid. 
\end{proof}

The matroid of the graph in Figure~\ref{fig: graphic major}
is a major of the graphic flag matroid from Example~\ref{ex: graphic flag matroid}.
In fact, every graphic flag matroid has a graphic major.

\begin{figure}
    \centering
    \begin{tikzcd}
   & \begin{tikzpicture}
            \vertex[fill] (2) at (8,0) [label=below:$2$] {};
  \vertex[fill] (3) at (10,0) [label=below:$3$] {};
  \vertex[fill] (4) at (10,2) [label=above:$4$] {};
  \vertex[fill] (1) at (8,2) [label=above:$1$] {};
\path
(1) edge node[left] {} (2) 
(1) edge node[below] {} (4)
(1) edge node[above, pos=0.75] {} (3)
(2) edge node[below, pos=0.33] {} (4)
(2) edge node[below] {} (3)
(4) edge node[right] {} (3)
(1) edge[bend right = 30] node[left] {$e_1$} (2)
(2) edge[bend right = 30] node[below] {$e_2$} (3)
(3) edge[bend right] node[right] {$e_3$} (4)
;
    \end{tikzpicture}
\end{tikzcd}
\caption{A graph whose matroid is a major of the graphic flag matroid given in Example~\ref{ex: graphic flag matroid}.}\label{fig: graphic major}
\end{figure}
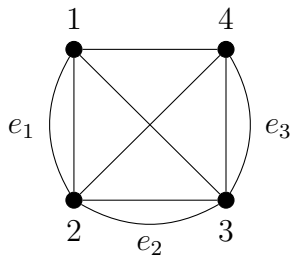

\begin{thm}\label{thm: graphic iff major is}
    A flag matroid is graphic if and only if there exists a major of it that is graphic.
\end{thm}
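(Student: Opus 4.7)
The plan is to prove both directions.

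For the $(\Leftarrow)$ direction, suppose $Q = M(H)$ is a graphic major of $\mathfrak{F}$, with distinguished independent set $X \subseteq E(H)$ and ordered partition $X = X_1 \cup \dots \cup X_{k-1}$. Each matroid $M_i$ in the sequential representation of $\mathfrak{F}$ is expressible as $M(H)/C_i \setminus D_i$ for a pair $(C_i, D_i)$ of disjoint subsets partitioning $X$, and because contraction and deletion in a cycle matroid correspond to the analogous operations on the underlying graph, $M_i = M(H/C_i \setminus D_i)$. Setting $G := H \setminus X$ and letting $\mathcal{P}_i$ be the partition of $V(H) = V(G)$ into connected components of the spanning subgraph on edge set $C_i$, one checks directly that $M_i = M(G, \mathcal{P}_i)$, and the chain $\mathcal{P}_1 \succ \dots \succ \mathcal{P}_k$ follows from the nesting of the $C_i$.

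For the $(\Rightarrow)$ direction, let $\mathfrak{F}$ have sequential representation $(M(G, \mathcal{P}_1), \dots, M(G, \mathcal{P}_k))$. I first normalize: by Proposition~\ref{prop: graphic flag matroids are flag matroids} I may assume $G$ is connected, and by replacing $G$ with the quotient graph identifying vertices within each part of $\mathcal{P}_k$, I may further assume $\mathcal{P}_k$ is the discrete partition. I then augment $G$ with disjoint new edge sets $X_1, \dots, X_{k-1}$ defined as follows: for each $i$ and each part $A$ of $\mathcal{P}_i$ that decomposes into parts $B_1, \dots, B_m$ of $\mathcal{P}_{i+1}$, add to $X_i$ an $(m-1)$-edge spanning tree on $\{B_1, \dots, B_m\}$ viewed as supervertices (each abstract edge realized concretely as an edge between representative vertices from two different $B_j$'s). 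Descending induction on $i$ then establishes the key claim: the spanning subgraph of $H$ on edge set $C_i := X_i \cup \dots \cup X_{k-1}$ has connected components exactly the parts of $\mathcal{P}_i$. From this, $X := X_1 \cup \dots \cup X_{k-1}$ is a forest (hence independent in $M(H)$), and contracting $C_i$ followed by deleting $D_i := X_1 \cup \dots \cup X_{i-1}$ in $H$ produces a graph on vertex set $\mathcal{P}_i$ with edge set $E$, which is precisely the graph whose cycle matroid is $M(G, \mathcal{P}_i) = M_i$. Thus $M(H)$ is a graphic major of $\mathfrak{F}$.

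The main obstacle I anticipate is the normalization step: the natural construction of $H$ yields $M(H) \setminus X = M(G)$ at the top of the sequence, which coincides with $M_k = M(G, \mathcal{P}_k)$ only when $\mathcal{P}_k$ is discrete. Reducing to this case via the quotient $G/\mathcal{P}_k$ resolves it cleanly, after which the recursive construction of the $X_i$ and the inductive verification amount to routine bookkeeping.
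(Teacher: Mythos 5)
Your proof is correct and takes essentially the same route as the paper's: for sufficiency you recover the partition chain as the connected components of the nested contracted edge sets of the graphic major, and for necessity you normalize so that the finest partition is discrete and adjoin spanning forests on representatives of the merging parts (the paper uses paths rather than arbitrary spanning trees, which is immaterial). No gaps.
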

\begin{proof}
    Let $\mathfrak{F}$ be a graphic flag matroid with sequential representation $(M_1, \dots, M_k)$.
    Let $G$ be a graph and let $\mathcal{P}_1 \succ \dots \succ \mathcal{P}_k$ be a chain of partitions of the vertices of $G$ such that $M_i = M(G,\mathcal{P}_i)$.
    Without loss of generality, assume that each cell of $\mathcal{P}_k$ is a singleton.
    We will now construct a graph $H$ by adding edges to $G$ so that $M(H)$ is a major of $\mathfrak{F}$.

    For $i=2,\dots,k$ consider the pair $((G, \mathcal{P}_{i-1}), (G, \mathcal{P}_i))$. Every cell in $\mathcal{P}_{i-1}$ is either a cell in $\mathcal{P}_i$, or is obtained by merging some cells in $\mathcal{P}_i$ into one.
    Let $\mathcal{P}_{i-1} = \{P_1, \dots, P_l\}$ and $\mathcal{P}_i = \{P'_1, \dots P'_q\}$.
    For $j=1, \dots, l$,
    if $P_j$ is not a cell of $\mathcal{P}_{i-1}$,
    let $P'_{j_1}, \dots P'_{j_m}$ be the cells of $\mathcal{P}_i$ whose union is $P_j$.
    For each $n = 1, \dots, m$, choose a single element of $P'_{j_n}$
    and call it $v_{n,i,j}$.
    Now let $H$ be the graph obtained from $G$ by adding an edge between
    $v_{n-1}^{i,j}$ and $v_n^{i,j}$ for each $n,i,j$ such that these vertices are defined.
    Then $M_i$ is obtained from $M(H)$ by contracting each edge $v_{n-1}^{a,j}v_n^{a,j}$
    when $a > i$, and deleting it otherwise.
    So $M(H)$ is a major of $\mathfrak{F}$.

    Now suppose $\mathfrak{F}$ is a flag matroid with sequential representation $(M_1,\dots,M_k)$ and that $G$ is a graph such that $M(G)$ is a major of $\mathfrak{F}$.
    Let $X_1,\dots,X_{k-1}$ be as in Definition~\ref{defn: major}.
    For $i =1,\dots,k$, let $\mathcal{P}_i$ be the partition of the vertex set of $G$
    such that $u$ and $v$ lie in the same cell if and only if
    there is a path from $u$ to $v$ using edges from $X_{i} \cup \dots \cup X_{k-1}$.
    Then $(M(G,\mathcal{P}_1),\dots,M(G,\mathcal{P}_k))$ is a sequential representation of $\mathfrak{F}$.
\end{proof}

The non-uniqueness of majors of majors of flag matroids raises the question of whether there is a ``best'' choice of major.
Indeed, in \cite[Exercise 8.14b,c]{kung1986strong} it is mentioned that for every flag matroid $\mathfrak{F}$, there is a weak-order maximal major of $\mathfrak{F}$. However, such maximal majors of a $\mathbb{K}$-representable/graphic flag matroid need not be $\mathbb{K}$-representable/graphic.
In other words, the majors guaranteed to exist by Theorems~\ref{thm: representable iff has representable major} and~\ref{thm: graphic iff major is} need not be weak-order
maximal among all majors of a given $\mathbb{K}$-representable/graphic flag matroid.

\section{Representability}\label{section: representability}
\subsection{Binary and Ternary Flag Matroids}

A flag matroid is \emph{binary} if it is representable over $\mathbb{F}_2$ and \emph{ternary} if representable over $\mathbb{F}_3$.
Theorem~\ref{thm: minors and duals of representable} tells us that the classes of binary and ternary flag matroids are closed under taking minors.
Therefore we can, in principle, characterize such flag matroids by listing the \emph{minimally} non-binary and non-ternary flag matroids,
i.e.~the flag matroids that are non-binary (respectively, ternary) but satisfy the property that every proper minor is binary (respectively, ternary).
In this section, we do this for the classes of binary and ternary matroids that are \emph{full},
a term we now define.

\begin{defn}
    A flag matroid with sequential representation $(M_1,\dots,M_k)$ is \emph{full} if $\rank(M_{i+1}) = \rank{M_i}+1$
    for $i = 1,\dots,k-1$.
    A flag matroid $\mathfrak{F}$ is a \emph{filling} of flag matroid $\mathfrak{G}$ if $\mathfrak{F}$ is full
    and $\mathfrak{G}$ can be obtained from $\mathfrak{F}$ by a sequence of chopping operations.
\end{defn}

Existence of a filling for every flag matroid is guaranteed to exist \cite[Proposition 7.3.5]{oxley}.
That said, fillings of a flag matroid need not be unique.
Consider for example the flag matroid $\mathfrak{F}$ on $\{e_1, e_2, e_3\}$ with sequential representation $(U_{1,3},U_{3,3})$.
The flag matroid with sequential representation $(U_{1,3},U_{2,3},U_{3,3})$ is a filling of $\mathfrak{F}$,
but so is the flag matroid with sequential representation $(U_{1,3}, M, U_{3,3})$, where $M$ is the matroid
on ground set $\{e_1,e_2,e_3\}$ with bases $\{e_1,e_2\}$ and $\{e_1,e_3\}$.

\begin{prop}\label{prop: representable iff filling is}
    A flag matroid $\mathfrak{F}$ is $\mathbb{K}$-representable if and only if there is a filling of $\mathfrak{F}$ that is $\mathbb{K}$-representable.
\end{prop}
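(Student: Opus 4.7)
The plan is to prove the two directions separately. For the $(\Leftarrow)$ direction, I observe that any filling $\mathfrak{F}'$ of $\mathfrak{F}$ has $\mathfrak{F}$ as a chopping, and chopping is a minor operation by Definition~\ref{defn: flag matroid minors and duality}. Hence Theorem~\ref{thm: minors and duals of representable} immediately yields that $\mathfrak{F}$ is $\mathbb{K}$-representable whenever a $\mathbb{K}$-representable filling exists.

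For the forward direction, I would start with a representation $\mathfrak{F} = \mathfrak{F}(A; d_1, \dots, d_k)$ with $A \in \mathbb{K}^{d_k \times n}$ and construct a full $\mathbb{K}$-representable flag matroid $\mathfrak{F}'$ that chops down to $\mathfrak{F}$. Let $L_i \subseteq \mathbb{K}^n$ denote the row span of $A_{\le d_i}$, so that $L_1 \subsetneq \dots \subsetneq L_k$ is a flag with $\dim L_i = d_i$. The plan is to refine this flag by inserting, for each $d_1 \le j \le d_k$, an intermediate subspace $L'_j$ of dimension $j$ with $L'_{d_i} = L_i$ and $L'_{j-1} \subsetneq L'_j$, and then to build a matrix $A' \in \mathbb{K}^{d_k \times n}$ whose first $j$ rows span $L'_j$ for each such $j$. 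To construct $A'$, I take the first $d_1$ rows to be any basis of $L_1$, and for each $d_1 < j \le d_k$ I append as the $j$-th row any vector in $L'_j \setminus L'_{j-1}$. Then $M(A'_{\le j}) = M(L'_j)$ by construction, so $\mathfrak{F}' := \mathfrak{F}(A'; d_1, d_1+1, \dots, d_k)$ is a full $\mathbb{K}$-representable flag matroid with sequential representation $(M(L'_{d_1}), \dots, M(L'_{d_k}))$. Since $M(L'_{d_i}) = M(L_i) = M(A_{\le d_i}) = M_i$, chopping $\mathfrak{F}'$ at the ranks not in $\{d_1, \dots, d_k\}$ yields $\mathfrak{F}$ by Proposition~\ref{prop: minors sequential representation}, establishing that $\mathfrak{F}'$ is a filling.

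The main obstacle is a minor one: confirming that the iterative basis extension is valid and that the refined flag $\{L'_j\}$ can be chosen with the stated properties. Both are routine linear algebra, since a codimension-one inclusion $L'_{j-1} \subsetneq L'_j$ always admits basis extension by a single vector, and intermediate subspaces of any dimension between $L_i$ and $L_{i+1}$ can always be produced. Once these points are verified, the rest of the argument is a direct unpacking of Definitions~\ref{defn: flag matroid from matrix} and~\ref{defn: flag matroid minors and duality}.
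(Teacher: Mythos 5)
Your proof is correct and follows essentially the same route as the paper: the backward direction is the identical appeal to chopping being a minor operation plus Theorem~\ref{thm: minors and duals of representable}, and for the forward direction the paper simply declares $\mathfrak{F}(A;d_1,d_1+1,\dots,d_k)$ to be the desired filling, which is exactly your construction once one assumes (as one may after row reduction) that $A$ has full row rank. Your explicit rebuilding of $A'$ merely makes sure each successive row genuinely raises the rank, a point the paper leaves implicit.
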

\begin{proof}
    If a filling of $\mathfrak{F}$ is $\mathbb{K}$-representable, then Theorem~\ref{thm: minors and duals of representable} implies that $\mathfrak{F}$ is as well.
    If $\mathfrak{F}$ has $\mathbb{K}$-representation $(A;d_1,\dots,d_k)$ then $\mathfrak{F}(A;d_1,d_1 + 1, \dots, d_k)$ is
    a $\mathbb{K}$-representable filling of $\mathfrak{F}$.
\end{proof}

\begin{defn}
    Let $\mathfrak{F}$ be a flag matroid on ground set $E$ with sequential representation $(M_1,\dots,M_k)$.
    A \emph{lift witness sequence for $\mathfrak{F}$} is a sequence of matroids $Q_1,\dots,Q_{k-1}$
    with $Q_i$ on ground set $E \sqcup X_i$ such that $M_i = Q_i / X_i$ and $M_{i+1} = Q_i \setminus X_i$
    and $X_i$ independent in $Q_i$.
    We call each $Q_i$ a \emph{lift witness matroid}.
\end{defn} 

A flag matroid $\mathfrak{F}$ may have multiple lift witness sequences
when $\mathfrak{F}$ is not full.
For example, the flag matroid with sequential representation $(U_{1,3}, U_{3,3})$ has multiple lift witnesses
matroids including $U_{2,5}$ and the one-element deletion of $M(K_4)$.
However, lift witness sequences for full flag matroids are unique,
as we will show after recalling two facts about matroid minors.

\begin{prop}{\cite[Corollary 3.1.24]{oxley}}\label{prop: lift pair equal iff (co)loop}
    Let $M$ be a matroid and $e \in E(M)$. Then $M/e = M\setminus e$ if and only if $e$ is a loop or coloop of $M$
\end{prop}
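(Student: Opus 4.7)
The plan is to prove both directions by directly manipulating the independent set description of deletion and contraction given earlier in the paper, splitting the analysis based on whether $\rank(\{e\}) = 0$ or $\rank(\{e\}) = 1$.

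For the forward direction, I would handle the two cases separately. If $e$ is a loop, i.e.\ $\rank(\{e\}) = 0$, then by the very definition of deletion and contraction recalled in Section~\ref{section: matroid background}, the independent sets of $M/e$ and $M \setminus e$ are both equal to $\{I \in \mathcal{I} : e \notin I\}$, so the equality is immediate. If $e$ is a coloop, meaning $e$ lies in every basis, I would observe that for any $I \subseteq E \setminus \{e\}$, the set $I$ is independent in $M$ if and only if $I \cup \{e\}$ is independent in $M$. The nontrivial direction here is that every independent $I$ not containing $e$ can be extended by $e$; this follows from the augmentation axiom applied to $I$ and any basis, together with the fact that $e$ lies in that basis. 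Thus $M/e$ and $M \setminus e$ have identical independent-set families.

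For the converse, I would assume $M/e = M \setminus e$ and derive that $e$ is a loop or coloop by supposing $e$ is not a loop and showing it must then be a coloop. If $\rank(\{e\}) = 1$, both $M \setminus e$ and $M/e$ have ground set $E \setminus \{e\}$, and the definitions give
\[
    \mathcal{I}(M \setminus e) = \{I \in \mathcal{I} : e \notin I\}, \qquad \mathcal{I}(M/e) = \{I \subseteq E \setminus \{e\} : I \cup \{e\} \in \mathcal{I}\}.
\]
The inclusion $\mathcal{I}(M/e) \subseteq \mathcal{I}(M \setminus e)$ is automatic from the hereditary axiom, so the assumed equality is really the reverse inclusion: every $I \in \mathcal{I}$ with $e \notin I$ satisfies $I \cup \{e\} \in \mathcal{I}$. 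Applying this to a basis $B$ of $M$ that avoids $e$ (if one existed) would force $B \cup \{e\} \in \mathcal{I}$, contradicting maximality; hence every basis of $M$ contains $e$, which is precisely the definition of a coloop.

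The main potential obstacle is simply being careful with the edge case where $e$ is a loop, since the definitions of deletion and contraction bifurcate based on whether $\rank(\{e\}) = 0$. Once that case is dispatched by definitional unfolding, the rest is a clean application of the augmentation axiom. No induction or clever construction is needed; the proof should fit comfortably in a short paragraph once written out formally.
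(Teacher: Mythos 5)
Your proof is correct. The paper does not actually prove this proposition---it cites it as Corollary~3.1.24 of Oxley---so there is no in-paper argument to compare against, and your direct verification from the independent-set descriptions of deletion and contraction (splitting on $\rank(\{e\})$) is the standard one. The only point to tighten is the coloop case: a single application of the augmentation axiom to $I$ and a basis $B$ need not produce $e$ as the augmenting element, so you should instead augment $I$ repeatedly to a full basis $B'$, note that $e \in B'$ because $e$ is a coloop, and conclude $I \cup \{e\} \subseteq B'$ is independent by heredity.
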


\begin{prop}{\cite[Proposition 3.1.27]{oxley}}\label{prop: lift witness matroid is unique}
    Let $M$ and $N$ be matroids on a common ground set $E$ and let $e \in E$. Then the following are equivalent:
    \begin{itemize}
        \item $M/e = N/e$ and $M\setminus e = N\setminus e$ 
        \item $M = N$, or $e$ is a loop of one of $M$ and $N$ and a coloop of the other
    \end{itemize}
\end{prop}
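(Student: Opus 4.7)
The plan is to prove both directions via a case analysis based on whether $e$ is a loop, coloop, or generic element of $M$ and $N$, with the standard rank identities as the engine. Recall that $\rank(M/e) = \rank(M) - 1$ unless $e$ is a loop of $M$ (in which case it equals $\rank(M)$), and $\rank(M\setminus e) = \rank(M)$ unless $e$ is a coloop (in which case it is $\rank(M)-1$). Consequently, $\rank(M/e) = \rank(M\setminus e)$ if and only if $e$ is a loop or coloop of $M$, which is essentially the content of Proposition~\ref{prop: lift pair equal iff (co)loop}.

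For the forward direction, assume $M/e = N/e$ and $M\setminus e = N\setminus e$. Comparing the ranks of these two common minors shows that $e$ is a loop or coloop of $M$ if and only if it is a loop or coloop of $N$. I would then split into three cases. First, if $e$ is neither a loop nor coloop of either matroid, the bases of $M$ are exactly the bases of $M\setminus e$ together with the sets $B \cup \{e\}$ for $B$ a basis of $M/e$; the identical reconstruction applied to $N$ forces $M = N$. Second, if $e$ is a loop of both (respectively a coloop of both), the bases of $M$ are the bases of $M\setminus e$ (respectively the sets $B \cup \{e\}$ for $B$ a basis of $M\setminus e$, using $M\setminus e = M/e$ here), and the same equalities for $N$ again give $M = N$. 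Third, the remaining possibility is that $e$ is a loop of one of $M, N$ and a coloop of the other, which is precisely the second alternative in the statement.

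For the reverse direction, $M = N$ trivially yields the equalities of minors. In the loop/coloop alternative, where $M$ and $N$ are understood to agree on $E \setminus e$, Proposition~\ref{prop: lift pair equal iff (co)loop} gives $M/e = M\setminus e$ and $N/e = N\setminus e$, and all four minors equal the common restriction to $E\setminus e$.

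The main obstacle, such as it is, will be the bookkeeping of the case split: making sure that the rank comparison correctly transfers loop/coloop status between $M$ and $N$, and that the basis-reconstruction formulas in each case are stated so that equality of the relevant minors immediately forces $M = N$. Once the rank calculation cleanly separates the three cases, the remainder is routine verification.
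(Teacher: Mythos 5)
Your argument is correct; note that the paper does not prove this proposition but simply cites it as \cite[Proposition 3.1.27]{oxley}, and your case analysis via the rank identities and basis reconstruction is essentially the standard textbook proof of that result. You were also right to read the second alternative as implicitly requiring $M\setminus e = N\setminus e$ (Oxley's statement includes this condition explicitly, and without it the reverse direction fails, e.g.\ for two matroids on $\{e,f\}$ where $f$ is a loop in one and a coloop in the other); as transcribed in the paper the equivalence is slightly imprecise, and your parenthetical patch is exactly what is needed.
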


\begin{prop}\label{prop: full flag matroids have unique lift witness sequence}
    Every full flag matroid has a unique lift witness sequence up to isomorphism. 
\end{prop}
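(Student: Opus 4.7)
The plan is to reduce the uniqueness of the lift witness sequence to uniqueness of each $Q_i$ at its own gap, then invoke Propositions~\ref{prop: lift pair equal iff (co)loop} and~\ref{prop: lift witness matroid is unique}. Fullness gives $\rank(M_{i+1}) = \rank(M_i)+1$ for every $i$. Because $X_i$ is independent in $Q_i$ we have $\rank(Q_i) = \rank(M_i) + |X_i|$, and combined with $\rank(Q_i\setminus X_i) \leq \rank(Q_i)$ this forces the canonical/minimal choice $|X_i|=1$ in the elementary setting; write $X_i = \{x_i\}$ throughout.

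Next, I would fix $i$ and suppose $Q_i$ and $Q_i'$ are two candidate lift witness matroids on the ground set $E \sqcup \{x_i\}$. By definition of the lift witness sequence, $Q_i/x_i = M_i = Q_i'/x_i$ and $Q_i\setminus x_i = M_{i+1} = Q_i'\setminus x_i$. Proposition~\ref{prop: lift witness matroid is unique} then yields one of two alternatives: either $Q_i = Q_i'$, or $x_i$ is a loop of one of $Q_i,Q_i'$ and a coloop of the other. The first alternative is exactly what we want. To rule out the second, I would note that $\{x_i\}$ is independent in both matroids, so $x_i$ is not a loop of either. And if $x_i$ were a coloop of, say, $Q_i$, then Proposition~\ref{prop: lift pair equal iff (co)loop} would give $Q_i/x_i = Q_i\setminus x_i$, hence $M_i = M_{i+1}$, contradicting the strict rank increase $\rank(M_{i+1}) = \rank(M_i)+1$. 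Thus $Q_i = Q_i'$.

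Repeating the argument at each of the $k-1$ gaps of the flag delivers the uniqueness of the entire sequence $(Q_1,\dots,Q_{k-1})$. The main obstacle, and the only non-routine point, is the preliminary reduction to the singleton case $|X_i|=1$: one must be careful that the elementary nature of each lift in a full flag matroid is what pins $X_i$ down to a single element before the loop/coloop dichotomy of Proposition~\ref{prop: lift witness matroid is unique} can be invoked. After that reduction the proof is a direct two-line application of the recalled propositions.
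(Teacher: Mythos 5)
Your argument is essentially the paper's: both proofs fix a gap $i$, observe that $M_i\neq M_{i+1}$ (so by Proposition~\ref{prop: lift pair equal iff (co)loop} the extra element is neither a loop nor a coloop of any candidate $Q_i$), and then conclude $Q_i=Q_i'$ from the dichotomy in Proposition~\ref{prop: lift witness matroid is unique}. The only point worth flagging is the step you yourself single out as the main obstacle: your rank computation $\rank(Q_i)=\rank(M_i)+|X_i|$ together with $\rank(Q_i\setminus X_i)\le\rank(Q_i)$ gives only the lower bound $|X_i|\ge\rank(M_{i+1})-\rank(M_i)=1$, not the upper bound, so it does not by itself ``force'' $|X_i|=1$; one really must take $|X_i|=1$ as part of the convention for lift witnesses of a full flag matroid (as the paper does silently here and explicitly in the proof of Theorem~\ref{thm: full binary forbidden minors}). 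Since the paper's proof makes the same singleton assumption without comment, this does not distinguish your argument from theirs, and the rest of your proof is correct.
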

\begin{proof}
    Let $\mathfrak{F}$ be a full flag matroid with sequential representation $(M_1, \dots, M_k)$ and ground set $E$.
    Fix some $i \in \{1,\dots,k-1\}$.
    Because $M_{i+1}$ is an elementary lift of $M_i$,
    there exists a matroid $Q_i$ with ground set $E\sqcup e$ such that
    $(Q_i/e, Q_i\setminus e) = (M_i,M_{i+1})$.
    Because $M_{i+1} \neq M_i$, Proposition \ref{prop: lift pair equal iff (co)loop} implies that
    $e$ is not a loop or coloop of $Q_i$.
    Thus Proposition \ref{prop: lift witness matroid is unique} implies $Q_i$ is the unique matroid such that $M_i = Q_i/e$ and $M_{i+1} = Q_i\setminus e$.
    Since this holds for each $i \in \{1,\dots,k-1\}$, the lift witness sequence $(Q_1, \dots, Q_{i-1})$ is unique. 
\end{proof}

Recall that two representations of a matroid over a field $\mathbb{K}$ are called \emph{projectively equivalent} if one can be obtained from
the other by left-multiplying with an invertible matrix, then adding or removing linearly dependent rows.
Projective equivalence of representations of matroids will be a helpful tool for constructing representations of flag matroids.
The following fact about projective equivalence of representations of certain matroids plays a key role in the rest of the paper.

\begin{prop}{\cite[Proposition 6.6.5 and Corollary 14.6.1]{oxley}}\label{prop: binary and ternary are projectively unique}
    Let $M$ be a matroid.
    If $M$ is binary then for every field $\mathbb{K}$, all $\mathbb{K}$-representations of $M$ are projectively equivalent.
    If $M$ is ternary then all $\mathbb{F}_3$-representations of $M$ are projectively equivalent.
\end{prop}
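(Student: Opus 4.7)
The plan is to show that, up to the operations constituting projective equivalence, any $\mathbb{K}$-representation of $M$ is determined by a combinatorial template read off from $M$. Fix a basis $B$ of $M$ and put an arbitrary $\mathbb{K}$-representation into reduced standard form $[\,I_r \mid D\,]$, with rows of the identity block indexed by $B$ and columns of $D$ indexed by $E \setminus B$. The first, purely matroidal, observation is that the zero/nonzero pattern of $D$ is forced by $M$: $D_{f,e} \neq 0$ if and only if $f$ lies in the fundamental circuit of $e$ with respect to $B$. Hence any two $\mathbb{K}$-representations of $M$ in standard form differ only in the values assigned to a fixed collection of positions, and the goal reduces to showing those values are determined up to row and column scaling.

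Next I would form the bipartite graph $H$ with vertex classes $B$ and $E \setminus B$ whose edges are the nonzero positions of $D$. After reducing to the connected case (apply the argument on each block of $M$ and reassemble), pick a spanning tree $T$ of $H$. Rescaling the rows of $[\,I_r \mid D\,]$ by an invertible diagonal matrix and rescaling the columns of $D$ are both absorbed by projective equivalence, so I can normalize $D_{f,e}=1$ for every edge $fe$ of $T$. Each remaining nonzero entry $D_{f,e}$ then closes a unique cycle in $T + fe$ within $H$, and its value is pinned down by the $2\times 2$ minor relation arising from the fundamental circuit of $M$ that certifies this cycle.

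For the binary case, the cycle relations over $\mathbb{F}_2$ are trivial and, when read over any field $\mathbb{K}$, force each non-tree entry of $D$ to equal $\pm 1$; a parity argument on cycles of $H$ using the binary cocircuit relations then singles out a consistent sign choice, yielding the claimed uniqueness. For the ternary case, working over $\mathbb{F}_3$, the nonzero entries of $D$ already lie in $\{+1,-1\}$, and the same cycle relations together with the ternary sign rule fix every non-tree entry uniquely. The main obstacle is verifying that the cycle relations really leave no residual freedom beyond what projective equivalence absorbs; this is exactly where the special structure of $\mathbb{F}_2$ and $\mathbb{F}_3$ enters, since the analogue over larger fields leaves a nontrivial multiplicative torus of free parameters, which is why the statement does not extend beyond these two cases.
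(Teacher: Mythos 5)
The paper does not actually prove this proposition: it is imported verbatim from Oxley (it is the Brylawski--Lucas unique-representability theorem for binary matroids and its ternary analogue), so there is no in-paper argument to compare against, and you are attempting a from-scratch proof of a genuinely nontrivial classical result. Your skeleton is the standard one --- put the representation in standard form $[I\mid D]$, observe that the support of $D$ is determined by the fundamental circuits, and normalize a spanning tree of the bipartite fundamental graph $H$ to $1$ using the row and column scalings that projective equivalence provides. Up to this point the outline is correct.

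The two steps where the theorem actually lives are asserted rather than proved. First, a chord $fe$ of your spanning tree $T$ closes a cycle of $H$ whose length may be much greater than $4$, so there is no single ``$2\times2$ minor relation'' attached to it; you must either show that the chords can be ordered so that each one lies on a $4$-cycle whose other three entries are already determined, or work directly with the $k\times k$ determinants carried by longer chordless cycles of $H$. Neither is done. Second, and more importantly, even for a $4$-cycle the information $M$ gives you is only whether the corresponding fully supported $2\times2$ submatrix of $D$ is singular or nonsingular. Singularity pins the fourth entry; nonsingularity does \emph{not} (over a general field it merely excludes one value), and this is exactly the freedom that makes the statement false over $\mathbb{F}_4$ and beyond. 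The binary case is rescued by the specific observation that a fully supported \emph{nonsingular} $2\times2$ submatrix of $D$ would exhibit a $U_{2,4}$ minor of $M$, so for binary $M$ every such submatrix is singular and the vanishing relation always applies; your substitute for this --- ``a parity argument on cycles of $H$ using the binary cocircuit relations'' --- is not an argument, and the $U_{2,4}$ exclusion is the input you need to state and use. For the ternary case over $\mathbb{F}_3$, the fact that nonzero entries lie in $\{1,-1\}$ does combine with the zero/nonzero dichotomy to pin each $4$-cycle entry, but the longer-cycle issue remains there as well. So the approach is the right one, but the load-bearing steps are missing rather than merely compressed.
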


\begin{defn}\label{defn: matroids whose K-reps are projectively equivalent}
    Given a field $\mathbb{K}$, let $\mathcal{M}(\mathbb{K})$ denote the class of $\mathbb{K}$-representable matroids $M$ such that
    all $\mathbb{K}$-representations of $M$ are projectively equivalent.
\end{defn}

If $\mathbb{K}$ is the field with either two or three elements, then Proposition~\ref{prop: binary and ternary are projectively unique}
implies that $\mathcal{M}(\mathbb{K})$ is precisely the class of $\mathbb{K}$-representable matroids.
Proposition~\ref{prop: binary and ternary are projectively unique} also implies that
for any field $\mathbb{K}$, if $M$ is $\mathbb{K}$-representable and binary, then $M \in \mathcal{M}(\mathbb{K})$. Projective uniqueness of binary and ternary matroids will be crucial in the following lemma. 

\begin{lemma}\label{lemma: proj unique K-rep iff lift witness seq is}
    Let $\mathfrak{F}$ be a flag matroid with sequential representation $(M_1,\dots,M_k)$.
    Assume that $M_i \in \mathcal{M}(\mathbb{K})$ for each $i = 1,\dots,k$
    and that $\mathfrak{F}$ has a lift witness sequence $(Q_1,\dots,Q_{k-1})$ such that each $Q_i$ is $\mathbb{K}$-representable.
    Then $\mathfrak{F}$ is $\mathbb{K}$-representable.
\end{lemma}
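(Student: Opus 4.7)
The plan is to construct a single matrix $A \in \mathbb{K}^{d_k \times n}$, where $n := |E|$ and $d_i := \rank(M_i)$, such that $M(A_{\le d_i}) = M_i$ for each $i$; this immediately gives $\mathfrak{F}(A; d_1, \dots, d_k) = \mathfrak{F}$. I will build $A$ inductively, producing matrices $A^{(1)}, \dots, A^{(k)}$ with each $A^{(i)} \in \mathbb{K}^{d_i \times n}$ a representation of $M_i$ and $A^{(i)}$ equal to the top $d_i$ rows of $A^{(i+1)}$; the final matrix $A := A^{(k)}$ then has the desired property.

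The base case takes $A^{(1)}$ to be any $\mathbb{K}$-representation of $M_1$, which exists by hypothesis. For the inductive step I use the lift witness $Q_i$: starting from a $\mathbb{K}$-representation of $Q_i$, I perform row-operations to bring the $X_i$-columns into the form $\begin{pmatrix} 0 \\ I_{|X_i|} \end{pmatrix}$, which is possible since $X_i$ is independent in $Q_i$. Let $B_i$ denote the block of $E$-columns. Then $B_i$ itself is a representation of $M_{i+1} = Q_i \setminus X_i$, while the top $d_i$ rows of $B_i$ are a representation of $M_i = Q_i / X_i$, since contracting $X_i$ quotients out the span of the $X_i$-columns and in the chosen form this amounts to discarding the bottom $d_{i+1} - d_i$ rows. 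Now $A^{(i)}$ and the top $d_i$ rows of $B_i$ are two $\mathbb{K}$-representations of the same matroid $M_i \in \mathcal{M}(\mathbb{K})$, hence projectively equivalent: there exist an invertible $T \in \mathbb{K}^{d_i \times d_i}$ and an invertible diagonal $D \in \mathbb{K}^{n \times n}$ with $A^{(i)} = T \cdot (\text{top of } B_i) \cdot D$. I then set
\[
A^{(i+1)} := \begin{pmatrix} T & 0 \\ 0 & I_{d_{i+1} - d_i} \end{pmatrix} B_i D,
\]
which is still a representation of $M_{i+1}$, because invertible row-operations and column scalings preserve the column matroid, and whose top $d_i$ rows coincide with $A^{(i)}$ by construction.

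The main obstacle is precisely the projective-equivalence step: this is the only point at which the hypothesis $M_i \in \mathcal{M}(\mathbb{K})$ is used, and it is what lets me reconcile the already-built matrix $A^{(i)}$ with the copy of $M_i$ sitting inside $B_i$. Without projective uniqueness the two representations could differ in a way that cannot be straightened out, and the induction would have no way to continue. Everything else—pivoting the witness $Q_i$, reading both $M_i$ and $M_{i+1}$ off of $B_i$, and merging this data into $A^{(i+1)}$—is routine linear algebra.
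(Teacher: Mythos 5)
Your proposal is correct and follows essentially the same route as the paper: both arguments extract from each $\mathbb{K}$-representable witness $Q_i$ a representation of the pair $(M_i,M_{i+1})$ and then use projective uniqueness of the representations of $M_i$ to align the shared top block via a block-diagonal change of basis. The only cosmetic difference is that you inline the pivoting argument that produces the pair's representation from $Q_i$, whereas the paper cites its major-representability theorem for that step.
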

\begin{proof}
    Theorem~\ref{thm: representable iff has representable major} implies 
    $(M_i,M_{i+1})$ is a $\mathbb{K}$-representable flag matroid for each $i$, as $Q_i$ is a $\mathbb{K}$-representable major.
    By induction, it now suffices to show that given $\mathbb{K}$-representable flag matroids $\mathfrak{F}_1$ and $\mathfrak{F}_2$ with sequential representations $(M_1, \dots, M_{k})$ and $(M_k, M_{k+1})$ with $k \ge 2$, if each $M_i \in \mathcal{M}(K)$,
    then $(M_1, \dots, M_{k+1})$ is the sequential representation of a $\mathbb{K}$-representable flag matroid.

    Indeed, let $(A;d_1,\dots,d_k)$ be a $\mathbb{K}$-representation of $\mathfrak{F}_1$ and let $(B;r_1,r_2)$ be a $\mathbb{K}$-representation of $\mathfrak{F}_2$.
    Without loss of generality we may assume that $d_k = r_1 = \rank(M_k)$ and that $A$ has $r_1$ rows.
    Since all $\mathbb{K}$-representations of $M_k$ are projectively equivalent,
    there exists an invertible $T \in \mathbb{K}^{r_1\times r_1}$ matrix such that $A = T(B_{\le r_1})$.
    Then if $\widehat T$ is the $r_2\times r_2$ block diagonal matrix with $T$ in the upper left
    and the $(r_2-r_1)\times (r_2-r_1)$ identity in the lower right,
    i.e.
    \[
        \widehat T := \begin{pmatrix}
            T & 0 \\
            0 & I
        \end{pmatrix},
    \]
    then $(\widehat T B)_{\le r_1} = A$
    and $(\widehat T B; r_1,r_2)$ is a $\mathbb{K}$-representation of $\mathfrak{F}_2$.
    Therefore $(\widehat T B; d_1,\dots,d_k,r_2)$ is a $\mathbb{K}$-representation of $(M_1,\dots,M_k,M_{k+1})$.
\end{proof}

We are now ready to prove our
forbidden minor characterizations of binary and ternary full flag matroids.

\begin{theorem}\label{thm: full binary forbidden minors}
    A full flag matroid is binary if and only if it has no minors of the form $(U_{2,4})$ or $(U_{1,3}, U_{2,3})$,
    and ternary if and only if it has no minors of the form $(R)$ or $(R/e, R\setminus e)$ where $R \in \{U_{2,5}, U_{3,5}, F_7, F^*_7\}$
\end{theorem}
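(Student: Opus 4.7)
By Theorem~\ref{thm: minors and duals of representable}, binary (resp.\ ternary) flag matroids form a minor-closed class, so it suffices to check that none of the listed forbidden minors is binary (resp.\ ternary). The single-matroid cases $(U_{2,4})$ and $(R)$ follow directly from Theorem~\ref{theorem: forbidden minors finite field representability}. For the two-constituent cases, I use the following auxiliary observation: if $\mathfrak{F}$ is a full $\mathbb{K}$-representable flag matroid with representation $(A; d_1, \ldots, d_k)$, then appending the column $(0, \ldots, 0, 1)^\top$ to $A_{\leq d_{i+1}}$ produces a $\mathbb{K}$-representation of the unique lift witness matroid $Q_i$ (one checks that the contraction and deletion of the new column yield $M_i$ and $M_{i+1}$ respectively, then invokes Proposition~\ref{prop: full flag matroids have unique lift witness sequence} for uniqueness). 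A direct computation identifies the unique lift witness matroid of $(U_{1,3}, U_{2,3})$ with $U_{2,4}$ and that of $(R/e, R\setminus e)$ with $R$ itself; neither is representable over the relevant field.

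\textbf{Reverse direction, constituents.} For the converse, fix $\mathbb{K} \in \{\mathbb{F}_2, \mathbb{F}_3\}$ and let $\mathfrak{F}$ be a full flag matroid with sequential representation $(M_1, \ldots, M_k)$ avoiding the corresponding forbidden minors. Each single-matroid flag matroid $(M_i)$ is obtained from $\mathfrak{F}$ via choppings, so any matroid-theoretic minor $M_i / X \setminus Y \cong N$ with $N$ an excluded matroid (namely $U_{2,4}$ when $\mathbb{K} = \mathbb{F}_2$, or one of $U_{2,5}, U_{3,5}, F_7, F_7^*$ when $\mathbb{K} = \mathbb{F}_3$) would produce the forbidden flag minor $(N)$ by applying the operations $/X \setminus Y$ to $(M_i)$. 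Hence each $M_i$ is $\mathbb{K}$-representable by Theorem~\ref{theorem: forbidden minors finite field representability}, and in particular lies in $\mathcal{M}(\mathbb{K})$ by Proposition~\ref{prop: binary and ternary are projectively unique}.

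\textbf{Reverse direction, lift witnesses.} Let $(Q_1, \ldots, Q_{k-1})$ be the unique lift witness sequence of $\mathfrak{F}$ from Proposition~\ref{prop: full flag matroids have unique lift witness sequence}, with $Q_i$ on ground set $E \sqcup \{e_i\}$. Suppose for contradiction that some $Q_i$ is not $\mathbb{K}$-representable, so $Q_i / X \setminus Y \cong N$ for an excluded matroid $N$ and disjoint $X, Y \subseteq E(Q_i)$. If $e_i \in X$ or $e_i \in Y$, then standard commutativity of matroid contraction and deletion makes $N$ a minor of $M_i = Q_i / e_i$ or of $M_{i+1} = Q_i \setminus e_i$, contradicting the previous paragraph. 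Otherwise $e_i$ lies in the ground set of $N$, and the identities $M_i / X \setminus Y = N / e_i$ and $M_{i+1}/X \setminus Y = N \setminus e_i$ (again by commutativity) show that chopping $\mathfrak{F}$ to the two-constituent minor $(M_i, M_{i+1})$ and then performing $/X \setminus Y$ yields the flag matroid minor $(N/e_i, N \setminus e_i)$, which is $(U_{1,3}, U_{2,3})$ in the binary case or $(R/e_i, R \setminus e_i)$ in the ternary case -- a forbidden minor, contradiction. So each $Q_i$ is $\mathbb{K}$-representable.

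\textbf{Conclusion.} With each $M_i \in \mathcal{M}(\mathbb{K})$ and each $Q_i$ now known to be $\mathbb{K}$-representable, Lemma~\ref{lemma: proj unique K-rep iff lift witness seq is} implies that $\mathfrak{F}$ is $\mathbb{K}$-representable, completing the proof. The main obstacle is the case $e_i \notin X \cup Y$ of the lift witness analysis: here an excluded matroid minor of $Q_i$ does not descend to an excluded minor of either constituent $M_i$ or $M_{i+1}$, so the only available obstruction is a genuinely two-constituent forbidden flag matroid minor of $\mathfrak{F}$. This dichotomy is precisely what the two families of forbidden minors in the theorem are designed to capture.
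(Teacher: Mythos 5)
Your proposal is correct and follows essentially the same strategy as the paper's proof: use minor-closedness for the forward direction, and for the converse show that each (unique) lift witness matroid $Q_i$ is free of the excluded matroid minors by the same trichotomy on whether the auxiliary element $e_i$ is contracted, deleted, or survives into the minor, then conclude via Proposition~\ref{prop: binary and ternary are projectively unique} and Lemma~\ref{lemma: proj unique K-rep iff lift witness seq is}. Your forward direction is in fact slightly more complete than the paper's, since you explicitly verify non-representability of the two-constituent forbidden minors by identifying their lift witness matroids with $U_{2,4}$ and $R$.
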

\begin{proof}
    Theorem~\ref{thm: minors and duals of representable} implies the ``only if'' direction.
    Let $\mathfrak{F}$ be a full flag matroid on ground set $E$ with sequential representation $(M_1,\dots,M_k)$.
    Let $(Q_1,\dots,Q_{k-1})$ be a lift witness sequence for $\mathfrak{F}$
    and for each $i$ let $X_i$ be independent in $Q_i$ such that $Q_{i} / X_i =M_i$ and $Q_i \setminus X_i = M_{i+1}$.
    Since $\mathfrak{F}$ is full, $X_i$ is a singleton set, so we may denote its unique element by $x_i$.

    Now suppose $\mathfrak{F}$ has no minor of the form $(U_{2,4})$ or $(U_{1,3},U_{2,3})$.
    We will show that each $Q_i$ has no minor of the form $U_{2,4}$.
    Theorem~\ref{theorem: forbidden minors finite field representability} will then imply that each $Q_i$ is binary.
    Since taking minors preserves representability, this will imply that each $M_i$ is binary,
    and therefore $M_i \in \mathcal{M}(\mathbb{F}_2)$ by Proposition~\ref{prop: binary and ternary are projectively unique}.
    Thus Lemma~\ref{lemma: proj unique K-rep iff lift witness seq is}
    will imply that $\mathfrak{F}$ is binary.
    
    Indeed, for the sake of contradiction, assume $Q_i$ has a $U_{2,4}$ minor on $\{a,b,c,d\} \subseteq E$.
    If $x_i \notin \{a,b,c,d\}$ then $\{a,b,c,d\} \subseteq E$ and $M_i$ has a $U_{2,4}$ minor on $\{a,b,c,d\}$
    contradicting our assumption that $\mathfrak{F}$ is free of $(U_{2,4})$ minors.
    Now assume without loss of generality that $x_i = d$.
    Let $T,S$ be the partition of $E \setminus \{a,b,c\}$ be such that $Q_i / T \setminus S$ is isomorphic to $U_{2,4}$.
    Then
    \begin{align*}
        &M_i / T \setminus S = Q_i / (T \cup \{x_i\}) \setminus S = U_{1,3} \qquad {\rm and}
        \\ &M_{i+1} / T \setminus S = Q_i / T \setminus (S \cup \{x_i\}) = U_{2,3}
    \end{align*}
    contradicting our assumption that $\mathfrak{F}$ is free of $(U_{1,3},U_{2,3})$ minors.
    
    A similar argument shows that if $\mathfrak{F}$ has no minor of the form $(R)$ or $(R/e, R\setminus e)$ where $R \in \{U_{2,5}, U_{3,5}, F_7, F^*_7\}$,
    then each $Q_i$ has no minor isomorphic to $U_{2,5}, U_{3,5}, F_7$ or $F^*_7$.
    As before, Theorem~\ref{theorem: forbidden minors finite field representability} then implies that each $Q_i$ is ternary.
    Then Proposition~\ref{prop: binary and ternary are projectively unique} and Lemma~\ref{lemma: proj unique K-rep iff lift witness seq is}
    imply that $\mathfrak{F}$ is ternary.
\end{proof}

Combining Theorem~\ref{thm: full binary forbidden minors} with Proposition~\ref{prop: representable iff filling is}
gives us the following.

\begin{cor}\label{cor: binary iff all fillings are binary}
    A flag matroid $\mathfrak{F}$ is binary if and only if there exists a filling of $\mathfrak{F}$ free of minors of the form $(U_{2,4})$ and $(U_{1,3},U_{2,3})$,
    and ternary if and only if there exists a filling of $\mathfrak{F}$ free of minors of the form $(R)$ or $(R/e, R\setminus e)$ where $R \in \{U_{2,5}, U_{3,5}, F_7, F^*_7\}$.
\end{cor}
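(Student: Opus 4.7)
The plan is to obtain the corollary as a one-step combination of Theorem~\ref{thm: full binary forbidden minors} and Proposition~\ref{prop: representable iff filling is}, handling the binary and ternary cases in parallel since the argument is identical once the relevant list of forbidden minor patterns is fixed. The key observation that makes this essentially automatic is that a filling is, by definition, a full flag matroid, which is precisely the setting where Theorem~\ref{thm: full binary forbidden minors} gives its forbidden-minor characterization.

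For the forward direction, I would assume $\mathfrak{F}$ is binary (respectively ternary) and invoke Proposition~\ref{prop: representable iff filling is} to produce a binary (respectively ternary) filling $\mathfrak{G}$ of $\mathfrak{F}$. Since $\mathfrak{G}$ is full, Theorem~\ref{thm: full binary forbidden minors} applies and guarantees that $\mathfrak{G}$ has no minor of the prescribed form, giving the filling required by the statement.

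For the converse, I would suppose $\mathfrak{F}$ admits a filling $\mathfrak{G}$ free of the listed minors. Because $\mathfrak{G}$ is full, Theorem~\ref{thm: full binary forbidden minors} tells us $\mathfrak{G}$ is binary (respectively ternary). Applying Proposition~\ref{prop: representable iff filling is} in the other direction then yields that $\mathfrak{F}$ itself is binary (respectively ternary).

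There is no substantive obstacle: each half is a single composition of the equivalence ``$\mathfrak{F}$ is $\mathbb{K}$-representable iff some filling of $\mathfrak{F}$ is $\mathbb{K}$-representable'' with the equivalence ``a full flag matroid is $\mathbb{F}_2$- or $\mathbb{F}_3$-representable iff it avoids the listed minor patterns.'' The only care needed is bookkeeping: confirming that the word ``filling'' in the corollary lines up with ``full flag matroid'' in the theorem, and that the two minor-exclusion conditions match verbatim so nothing is lost in translation between the two statements.
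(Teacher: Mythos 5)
Your proof is correct and follows exactly the route the paper takes: the corollary is stated there as an immediate combination of Theorem~\ref{thm: full binary forbidden minors} and Proposition~\ref{prop: representable iff filling is}, using precisely the observation that a filling is by definition a full flag matroid. Nothing further is needed.
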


The list of minimal forbidden minors for $\mathbb{F}_4$-representability is known~\cite{geelen2000excluded},
so one might wonder if they can be turned into a forbidden minor characterization for $\mathbb{F}_4$-representable
full flag matroids, similarly to the case of $\mathbb{F}_2$ and $\mathbb{F}_3$ representability.
However, in the $\mathbb{F}_4$ case we lose Proposition~\ref{prop: binary and ternary are projectively unique} since different $\mathbb{F}_4$-representations of $\mathbb{F}_4$-representable
matroids need not be projectively equivalent~\cite[Proposition~14.6.3]{oxley}.
Thus a different approach is needed to characterize $\mathbb{F}_4$-representability of flag matroids.

\subsection{Regular Flag Matroids}
We now turn our attention to \emph{regular matroids} and \emph{regular flag matroids}.

\begin{defn}
    A matrix $A\in \mathbb{Z}^{r\times n}$ is \emph{unimodular} if every maximal minor of $A$ lies in $\{-1, 0, 1\}$. A matroid $M$ is \emph{regular} if there exists a unimodular matrix $A$ such that $M=M(A)$.
\end{defn}

Although commonly defined using \emph{totally unimodular matrices}, regular matroids can be equivalently be defined using unimodular matrices, which fits into the theory of matroids over tracts with more ease. The advantage of using this definition is that it provides a nice segue into the definition of \emph{regular flag matroids} according to the theory of flag matroids over tracts\cite{jarra2024flag}. Before defining regular flag matroids, we first state the well-known characterization of regular matroids via minimal forbidden minors. 

\begin{thm}[{\cite[Theorem~6.6.6]{oxley}}]\label{thm: forbidden minors regularity}
    A matroid is regular if and only if it has no minors isomorphic to any of $U_{2,4}, F_7$ or $F_7^*$.
\end{thm}

\begin{defn}
     A flag matroid $\mathfrak{F}$ with sequential representation $(M_1, \dots, M_k)$ is \emph{regular} if there exists a unimodular matrix $A$ and a sequence of integers $1\leq d_1 < \cdots < d_k = r$ such that  $A_{\leq d_i}$ is unimodular for every $i$ and $\mathfrak{F} = \mathfrak{F}(A; d_1, \dots, d_k)$.
\end{defn}

\begin{lemma}\label{lemma: regular iff lift witness is}
    A full flag matroid is regular if and only if every matroid in its lift witness sequence is regular. 
\end{lemma}
\begin{proof}
    Let $\mathfrak{F}$ be a regular full flag matroid with sequential representation $(M_1, \dots, M_k)$, and let $A$ be a unimodular matrix such that $\mathfrak{F} = \mathfrak{F}(A)$. For a fixed $i$, we consider the pair $(M_i, M_{i+1})$. We consider now the following matrix
    \begin{equation*}
        D:= \begin{pmatrix}
            \begin{array}{c|c}
                \multirow{3}{*}{\scalebox{1.5}{$A_{\leq i+1}$}} & \mathlarger{\textbf{0}} \\
                 & \\
                & \textbf{$1$}
            \end{array}
        \end{pmatrix}
    \end{equation*}
    where $D$ is obtained by appending the unit column vector with the last entry being $1$ onto $A_{\leq i+1}$. The linear matroid of $D$, $Q_i := M(D)$, will be the unique lift witness matroid of $(M_i, M_{i+1})$. It now suffices to show that $D$ is unimodular. Any maximal minor of $D$ that does not involve the new column vector will be in $\{-1, 0, 1\}$. Any maximal minor that does involve the new column vector will necessarily still have to be in $\{-1, 0, 1\}$, as performing a Laplace expansion on the $1$ in the new unit vector will result in computing a maximal minor of $A_{\leq i}$, which is a unimodular matrix. Therefore, $Q_i$ is regular. 

    The other direction follows by Lemma \ref{lemma: proj unique K-rep iff lift witness seq is} using the fact that all representations of a regular matroid are projectively equivalent\cite[Proposition 6.6.5]{oxley}. 
\end{proof}

With Lemma \ref{lemma: regular iff lift witness is}, we can now prove that the class of regular full flag matroids is minor closed, up to some conditions on choppings. We define a \emph{non-middle chopping} to be a chopping where either the first or last matroid in the sequential representation is removed. 

\begin{prop}\label{prop: regular full flag matroids minors}
    Let $\mathfrak{F}$ be a regular full flag matroid and let $\mathfrak{F}'$ be a flag matroid obtained from $\mathfrak{F}$ by a sequence of deletions, contractions, and non-middle choppings only. Then $\mathfrak{F}'$ will be a regular full flag matroid.
\end{prop}
\begin{proof}
    Let $A$ be a unimodular matrix so that $\mathfrak{F} = \mathfrak{F}(A)$. If we consider non-middle choppings, then $C_{-1}(\mathfrak{F}) = \mathfrak{F}(A; d_1+1, d_1+2, \dots, d_1+r)-1$ and $C_{-k}(\mathfrak{F}) = \mathfrak{F}(A; d_1, d_1+1, \dots, d_1+r-2)$, and so the resulting full flag matroid will remain regular. 

    Deletions will correspond to removing columns from $A$, which preserve unimodularity. In order to prove that contractions preserve regularity, it suffices to show that $\mathfrak{F}^*$ will be regular. To do so, using Lemma \ref{lemma: regular iff lift witness is}, it suffices to show that if $(Q_1, \dots, Q_{k-1})$ is the lift witness sequence of $\mathfrak{F}$, then $(Q_{k-1}^*, \dots, Q_1^*)$ is the lift witness sequence of $\mathfrak{F}^*$, as the dual of a regular matroid will remain regular.

    Fix $i$, and let $(M_i, M_{i+1})$ be a pair in the sequential representation of $\mathfrak{F}$. This pair will have $Q_i$ as the unique lift witness matroid, and so we have $(M_i, M_{i+1}) = (Q_i/e, Q_i\setminus e)$ for some $e \in E(Q_i)$. By Proposition \ref{prop: minors sequential representation}, we obtain the following
    \begin{equation*}
        (Q_i/e, Q_i\setminus e)^* = (M_i, M_{i+1})^* = (M_{i+1}^*, M_i^*) = ((Q\setminus e)^*, (Q/e)^*) = (Q^*/e, Q^*\setminus e)
    \end{equation*}
Therefore, $Q^*$ is the unique lift witness matroid of $(M_{i+1}^*, M_i^*)$. 
\end{proof}

Because the class of regular full flag matroids is minor closed (only considering non-middle choppings), we can say that in spirit there will exist a list of minimal forbidden minors that completely determine when full flag matroids are regular. Using the same argument as in the proof of Theorem \ref{thm: full binary forbidden minors} along with Theorem \ref{thm: forbidden minors regularity}, we obtain the following result.

\begin{thm}\label{thm: regular flag matroids forbidden minors}
    A full flag matroid is regular if and only if it has no minors of the form $(R)$ or $(R/e, R\setminus e)$ where $R \in \{U_{2, 4}, F_7, F_7^*\}$.
\end{thm}

In addition to their minimal forbidden minor characterization, regular matroids also satisfy the attractive property of having multiple equivalent elegant characterizations. A matroid is regular if and only if it is representable over every field\cite[Theorem 6.6.3]{oxley}, if and only if it is binary and ternary\cite[Theorem 10.1.2]{oxley}. We can provide similar characterizations for regular full flag matroids. To do so, we will make use of the following well-known property of determinants.

\begin{lemma}\label{lemma: det commutes}
    Let $A \in R^{r\times r}$ be a matrix with entries in a ring $R$, and let $f : R\to S$ be a ring homomorphism. Then $f(\det(A)) = \det(f(A))$.
\end{lemma}

\begin{thm}\label{thm: regular flag matroids equivalences}
    Let $\mathfrak{F}$ be a full flag matroid. The following statements are equivalent. 
    \begin{enumerate}
        \item $\mathfrak{F}$ is regular.
        \item $\mathfrak{F}$ is representable over every field.
        \item $\mathfrak{F}$ is binary and ternary.
    \end{enumerate}
\end{thm}
\begin{proof}
    $(2)$ immediately implies $(3)$. If we now suppose $\mathfrak{F}$ is binary and ternary, then this implies every matroid in the lift witness sequence is binary and ternary by Lemma \ref{lemma: proj unique K-rep iff lift witness seq is}, which implies that every such matroid in the lift witness sequence is regular, which in turn implies $(1)$ by Lemma \ref{lemma: regular iff lift witness is}. 

    We now show that $(1)\Rightarrow (2)$. Consider the unique ring homomorphism $f : \mathbb{Z} \to \mathbb{K}$ for any field $\mathbb{K}$. For any unimodular representation $(A; d_1, d_1+1, \dots, d_1+r-1)$ of $\mathfrak{F}$, we have that $f(\det(A)) = \det(f(A))$ by Lemma \ref{lemma: det commutes}, which tells us that $(f(A); d_1, d_1+1, \dots, d_1+r-1)$ will be a $\mathbb{K}$-representation of $\mathfrak{F}$.
\end{proof}

\subsection{Graphic Matroids}

The list of excluded minors for graphic matroids is also known,
but our methods for $\mathbb{F}_2$ and $\mathbb{F}_3$ representability also do not generalize.
In particular, there exist non-graphic full flag matroids satisfying the property that every matroid in the lift witness sequence is graphic.

\begin{prop}\label{prop: lift witness sequence graphic does not imply flag matroid graphic}
    There exists a full flag matroid $\mathfrak{F}$ such that every member of its lift witness sequence is graphic, but $\mathfrak{F}$ is not graphic. 
\end{prop}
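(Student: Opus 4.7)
My plan is to exhibit an explicit counterexample rather than prove the existence abstractly. The shortest possible example should be a full flag matroid $\mathfrak{F}=(M_1,M_2,M_3)$ of length $k=3$, because by Theorem~\ref{thm: graphic iff major is} any length-$2$ flag matroid with a graphic lift witness matroid is immediately graphic (the lift witness matroid is a graphic major). So $k=3$ is the minimal length where consecutive graphicness of the $Q_i$ can fail to glue.

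The construction exploits the non-uniqueness of graphic representations. I would pick $M_2$ to be a graphic matroid that admits two essentially distinct graph realizations: for instance, a matroid of a graph that is not $3$-connected, so that Whitney's $2$-isomorphism theorem permits a Whitney twist or a vertex splitting/identification producing a non-isomorphic graph with the same cycle matroid. Calling those two graphs $G$ and $H$, I would then choose $M_1$ (an elementary quotient of $M_2$) so that the unique lift witness matroid $Q_1$ with $Q_1/x_1=M_1$ and $Q_1\setminus x_1=M_2$ is graphic, realized by adding an edge to $G$; and choose $M_3$ (an elementary lift of $M_2$) so that $Q_2$ with $Q_2/x_2=M_2$ and $Q_2\setminus x_2=M_3$ is graphic, realized by adding an edge to $H$. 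Uniqueness of the lift witness matroids for full flag matroids (Proposition~\ref{prop: full flag matroids have unique lift witness sequence}) makes this unambiguous.

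The three main steps would be: \textbf{(1)} Write down $\mathfrak{F}=(M_1,M_2,M_3)$ explicitly, along with the graphs $G'$ and $H'$ (obtained from $G$ and $H$ by adding one edge each) witnessing graphicness of $Q_1$ and $Q_2$. \textbf{(2)} Verify graphicness by directly checking the cycle matroids. \textbf{(3)} Show $\mathfrak{F}$ is not graphic. For step (3) I would use Proposition~\ref{prop: graphic flag matroids are flag matroids}: if $\mathfrak{F}$ were graphic, there would exist a single graph $K$ and a chain of partitions $\mathcal{P}_1\succ \mathcal{P}_2 \succ \mathcal{P}_3$ of $V(K)$ with $M_i=M(K,\mathcal{P}_i)$. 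Specializing at the middle index forces $M(K,\mathcal{P}_2)=M_2$, so by Whitney's $2$-isomorphism theorem the pair $(K,\mathcal{P}_2)$ is determined up to a finite list of Whitney moves on each cell. The design of $M_1$ forces the coarsening $\mathcal{P}_1$ to live on one of those Whitney equivalents, while the design of $M_3$ forces the refinement $\mathcal{P}_3$ to live on the other; a case analysis rules out every combination, yielding the contradiction.

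The main obstacle is the design phase: finding the smallest $M_2$ with two sufficiently different graph realizations, and then showing that the tailored $M_1$ and $M_3$ really do force opposite realizations of $M_2$. A natural first attempt is to take $M_2$ to be the cycle matroid of a graph with a cut vertex or a $2$-separation (for example, two triangles sharing a vertex, or two $K_4$'s glued along an edge), where Whitney twists produce a visibly different graph; the verification of non-graphicness then reduces to a short enumeration. An alternative, if the direct argument becomes unwieldy, is to appeal to Theorem~\ref{thm: graphic iff major is} and Theorem~\ref{theorem: forbidden minors graphic}, and instead show that every matroid $Q$ with $(M_1,M_2,M_3)$ arising as consecutive contraction-deletions of $Q$ must contain one of $U_{2,4},F_7,F_7^*,M(K_5)^*,M(K_{3,3})^*$ as a minor.
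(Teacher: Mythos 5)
Your plan is essentially the paper's strategy: take a length-three full flag matroid whose middle matroid $M_2$ has two genuinely different graph realizations, arrange $M_1$ so that the first lift witness matroid is graphic only over one realization and $M_3$ so that the second is graphic only over the other, and then show no single graph with a chain of partitions realizes all three levels. Your opening observation that $k=2$ cannot work (a graphic lift witness matroid is a graphic major, so Theorem~\ref{thm: graphic iff major is} applies) is correct and is a nice justification for why the example must have length three.

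The gap is that you never produce the example, and for an existence statement the example \emph{is} the proof. Everything after ``the design phase'' is conditional on finding graphs with the required properties, and you explicitly flag that phase as the main obstacle without resolving it; neither of your candidate choices for $M_2$ (two triangles sharing a vertex, two $K_4$'s glued along an edge) is shown to admit compatible choices of $M_1$ and $M_3$. The paper resolves this with specific multigraphs $H_1,H_2,G_2,G_3$: $M_2=M(H_2)=M(G_2)$, $M_1=M(H_1)$ is obtained by a vertex identification in $H_2$, $M_3=M(G_3)$ with $G_3$ a vertex splitting of $G_2$, and the two lift witness matroids are the cycle matroids of $H_2$ and $G_3$ with one extra edge each. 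A second, smaller issue is your step (3): anchoring the case analysis at the middle level via Whitney's $2$-isomorphism theorem is workable in principle, but the object you must control is the pair $(K,\mathcal{P}_2)$, not just a graph realizing $M_2$, and the resulting enumeration is awkward. The paper instead chooses $M_3$ to be the cycle matroid of a $3$-connected graph, so the finest level $(K,\mathcal{P}_3)$ is pinned down uniquely by \cite[Lemma~5.3.2]{oxley}, and the remaining work is a short finite check of which vertex identifications of $G_3$ yield $M_2$ and then $M_1$. If you pursue your version, build that $3$-connectedness (or some equivalent rigidity at one end of the chain) into the design, or the case analysis in step (3) will not obviously terminate.
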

\begin{proof}
     Let $H_1,H_2,G_2,$ and $G_3$ be the graphs shown in Figure~\ref{fig: graphs}
    and observe that $G_2$ is formed from $G_3$ by identifying the red vertices of $G_3$ together,
    that $M(G_2) = M(H_2)$, and $H_1$ is formed by identifying the two red vertices of $H_2$ together. 
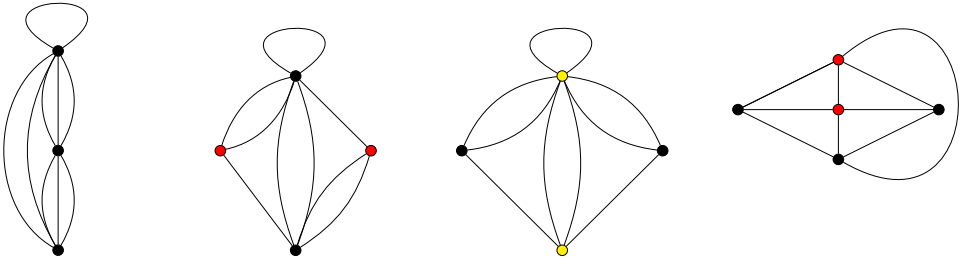
\begin{figure}[H]
    \centering
\resizebox{14cm}{!}{
\begin{tikzpicture}
            \vertex[fill] (1) at (2,-1) {};
  \vertex[fill] (2) at (2,1)  {};
  \vertex[fill] (3) at (2,3) {};
  \node[] at (0.60, 0) {};
  \path (3) edge [out=150,in=35,looseness=30] node[above] {} (3)
  (2) edge node[above, pos=0.4] {} (3)
  (1) edge[bend left = 30] node[above, pos=0.7] {} (2)
  (1) edge[bend right = 30] node[below, pos=0.8] {} (2)
  (1) edge[] node[] {} (2)
  (2) edge[bend left = 30] node[above, pos=0.7] {} (3)
  (2) edge[bend right = 30] node[below, pos=0.8] {} (3)
  (1) edge[bend left = 30] node[] {} (3)
  (1) edge[bend left = 60] node[] {} (3)
  ;
\end{tikzpicture}
\hspace{5mm}
\begin{tikzpicture}
  \vertex[fill] (1) at (0, 4) {};
  \vertex[fill] (2) at (0,0.5) [] {};
  \vertex[fill=red, label=left:$1$] (3) at (-1.5, 2.5){};
  \vertex[fill=red, label=right:$2$] (4) at (1.5, 2.5) [] {};
  \path 
  (1) edge [out=150,in=35,looseness=30] node[above] {} (1)
  (2) edge[bend left = 20] node[] {} (1)
  (2) edge[bend right = 20] node[] {} (1)
  (2) edge[bend left = 20] node[above, pos=0.4] {} (4)
  (2) edge[bend right = 20] node {} (4)
  (2) edge node[] {} (3)
  (3) edge[bend left = 30] node[] {} (1)
  (3) edge[bend right = 30] node[] {} (1)
  (4) edge[] node[] {} (1)
  ;
\end{tikzpicture}
\hspace{10mm}
\begin{tikzpicture}
  \vertex[fill=yellow, label=above:$3$] (1) at (0, 4) {};
  \vertex[fill=yellow, label=below:$4$] (2) at (0,0.5) [] {};
  \vertex[fill] (3) at (-2, 2.5){};
  \vertex[fill] (4) at (2, 2.5) [] {};
  \path (1) edge [out=150,in=35,looseness=30] node[above] {} (1)
  (2) edge[bend left = 20] node[] {} (1)
  (2) edge[bend right = 20] node[] {} (1)
  (2) edge node[above, pos=0.4] {} (4)
  (2) edge node[] {} (3)
  (3) edge[bend left = 30] node[] {} (1)
  (3) edge[bend right = 30] node[] {} (1)
  (4) edge[bend left = 30] node[] {} (1)
  (4) edge[bend right = 30] node[] {} (1)
  ;
\end{tikzpicture}
\hspace{10mm}
\begin{tikzpicture}
  \vertex[fill] (1) at (5,1) [] {};
  \vertex[fill] (2) at (7,2) [] {};
  \vertex[fill=red, label=above:$5$] (3) at (5,3) [] {};
  \vertex[fill] (4) at (3,2) [] {};
  \vertex[fill=red, label=below left:$6$] (5) at (5,2) [] {};
    \path
    (1) edge node[left] {} (2) 
    (1) edge node[below] {} (5)
    (3) edge node[below] {} (5)
    (1) edge node[above, pos=0.75] {} (4)
    (2) edge node[below, pos=0.33] {} (3)
    (2) edge node[below] {} (5)
    (4) edge node[below] {} (5)
    (3) edge node[right] {} (4)
    (3) edge node {} (4)
    (1) edge[out = -30, in = 40, looseness = 4.5] node {} (3)
    ;
\end{tikzpicture}
}
\caption{From left to right, the graphs $H_1,H_2,G_2,G_3$.}\label{fig: graphs}
\end{figure}
For the sake of clarity, we will refer to vertices $1$ and $2$ in $H_2$ as \emph{red vertices}, vertices $3$ and $4$ in $G_2$ will be referred to as \emph{yellow vertices}, and vertices $5$ and $6$ in $G_3$ will be referred to as \emph{red vertices}. All other vertices for any graph here will be referred to as a \emph{black vertex}.

Define $M_1 := M(H_1)$, $M_2 := M(H_2) \cong M(G_2)$, and $M_3 := M(G_3)$.
Then $(M_1, M_2, M_3)$ is the sequential representation of a full flag matroid $\mathfrak{F}$.
Let $N_1$ be the matroid of the graph obtained from $H_2$ by adding an edge between the red vertices, and let $N_2$ be the matroid of the graph obtained from $G_3$ by adding a second edge between the red vertices.
Then $(N_1,N_2)$ is the lift witness sequence of $\mathfrak{F}$.

We now show that $\mathfrak{F}$ is not graphic. 
Suppose there exists a graph $L$ and a chain of partitions $\mathcal{P}_1 \succ \mathcal{P}_2 \succ \mathcal{P}_3$
such that $M_i = M(L, \mathcal{P}_i)$ for $i = 1,2,3$.
Because $G_3$ is $3$-connected, it must be the case that $(L, \mathcal{P}_3) = G_3$ by \cite[Lemma~5.3.2]{oxley}.
So assume $L = G_3$ and that $\mathcal{P}_3$ is the partition of the vertices of $G_3$ into singletons.

We now show that any other graph formed from $G_3$ by identifying any two vertices in a way that does not result in $G_2$
is a graph whose cycle matroid is different from $M(G_2)$,
thus implying that $(G_3, \mathcal{P}_2) = G_2$.
Indeed, by symmetry,
it suffices to only consider the cases where the top red vertex in $G_3$ is identified with the left black vertex,
and where the left and right black vertices are identified together.
Let $G_3^{rb}$ denote the graph obtained from $G_3$ by identifying the top red vertex with the left black vertex,
and let $G_3^{bb}$ denote the graph obtained from $G_3$ by identifying the two black vertices.
These graphs are shown in Figure~\ref{fig: Gbb and Grb}.
Indeed, $M(G_3^{bb})$ has no loops whereas $M_2$ does,
and $M(G_3^{rb})$ has only two parallel classes whereas $M_2$ has three.
Thus we must have $(G_3,\mathcal{P}_2) = G_2$.

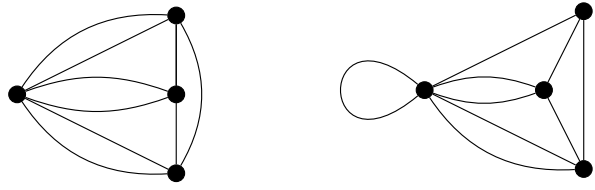
\begin{figure}[H]
    \centering
\resizebox{8cm}{!}{
\begin{tikzpicture}
  \vertex[fill] (2) at (0, 2) {};
  \vertex[fill] (3) at (0,0) [] {};
  \vertex[fill] (1) at (-2, 1){};
  \vertex[fill] (4) at (0, 1) {};
  \path 
  (1) edge[] node[] {} (2)
  (1) edge[bend left = 30] node[] {} (2)
  (2) edge node[above, pos=0.4] {} (4)
  (2) edge node[] {} (3)
  (3) edge[] node[] {} (1)
  (3) edge[bend left = 30] node[] {} (1)
  (4) edge[bend left = 20] node[] {} (1)
  (4) edge[bend right = 20] node[] {} (1)
  (2) edge[bend left = 30] node[] {} (3)
  ;
\end{tikzpicture}
\hspace{10mm}
\begin{tikzpicture}
  \vertex[fill] (1) at (5,-1) [] {};
  \vertex[fill] (2) at (4.5,0) [] {};
  \vertex[fill] (3) at (3,0) [] {};
  \vertex[fill] (4) at (5,1) [] {};
    \path
    (1) edge[] node[] {} (4) 
    (1) edge[] node[] {} (2)
    (1) edge[] node[] {} (3) 
    (1) edge[bend left = 30] node[] {} (3)
    (4) edge[] node[] {} (2)
    (4) edge[] node[] {} (3)
    (2) edge[bend left = 20] node[] {} (3)
    (2) edge[bend right = 20] node[] {} (3)
    (3) edge[out = 140, in = 220, looseness = 30] node[] {} (3)
     ;
\end{tikzpicture}
}
\caption{From left to right, the graphs $G_3^{bb}$ and $G_3^{rb}$.}\label{fig: Gbb and Grb}
\end{figure}


On the graph level, $(G_3, \mathcal{P}_1)$ is formed from $(G_3, \mathcal{P}_2)$ by identifying two vertices together.
If a yellow vertex in $G_2$ is identified with a black vertex, the resulting graph would have three loops, and so the resulting graphic matroid would not be $M_1$.
A similar situation arises if the two yellow vertices are identified together.
If the two black vertices are identified together, then the resulting matroid would have four elements in parallel, whereas $M(H_1)$ does not.
Therefore, there is no vertex identification in $G_2$ that results in a graph whose cycle matroid equals $M_1$, and so $\mathfrak{F}$ cannot be graphic. 
\end{proof}

\section{Acknowledgments}
We would like to thank Changxin Ding and Donggyu Kim for the productive discussion regarding regular full flag matroids resulting in the forbidden minor characterization, along with the productive discussion regarding full graphic flag matroids and for providing the example in Figure~\ref{fig: graphs}.

\bibliographystyle{abbrv}
\bibliography{flagmatroids}
\end{document}